\newcommand{\noop}[1]{} %bibtex command
\newcommand{\ind}{{\sf 1}}
\newcommand{\bP}{\mathbf{P}}
\newcommand{\bQ}{\mathbf{Q}}
\newcommand{\bE}{\mathbf{E}}
\newcommand{\bbR}{\mathbb{R}}
\newcommand{\R}{\mathbb{R}}
\newcommand{\N}{\mathbb{N}}
\newcommand{\bbZ}{\mathbb{Z}}
\newcommand{\cE}{{\ensuremath{\mathcal E}} }
\newcommand{\cC}{{\ensuremath{\mathcal C}} }
\newcommand{\cD}{{\ensuremath{\mathcal D}} }
\renewcommand{\epsilon}{\varepsilon}
\renewcommand{\phi}{\varphi}
\newcommand{\gb}{\beta}
\newcommand{\gd}{\delta}
\newcommand{\eps}{\varepsilon}      % \ge already exists...
\newcommand{\go}{\omega}
\newcommand{\gs}{\sigma}
\newcommand{\gS}{\Sigma}
\newcounter{cst}[section]		% ``Constant''
\newcounter{svf}[section]		% ``Slowly varying function''
\newtheorem{theorem}{Theorem}[section]
\newtheorem{proposition}[theorem]{Proposition}
\newtheorem{corollary}[theorem]{Corollary}
\newtheorem{lemma}[theorem]{Lemma}
\theoremstyle{definition}
\newtheorem{definition}[theorem]{Definition}
\newtheorem*{definition*}{Definition}
\theoremstyle{remark}
\newtheorem{remark}{Remark}[section]
\newtheorem*{notation*}{Notation}
\newtheorem{example}[remark]{Example}
\numberwithin{equation}{section}			% Numérotation des équations
\newcommand{\dd}{\mathrm{d}}		% Pour le d droit des intégrales
\renewcommand{\preceq}{\preccurlyeq}		% Ordre \prec au sens large
\renewcommand{\succeq}{\succcurlyeq}		% Ordre \succ au sens large
\renewcommand{\hat}{\widehat}
\renewcommand{\tilde}{\widetilde}
\renewcommand{\hat}{\widehat}
\renewcommand{\tilde}{\widetilde}
\newcommand{\Bor}{\mathrm{Bor}}
\newcommand{\cX}{\mathcal{X}}
\newcommand{\bbp}{\boldsymbol{p}}
\newcommand{\ceq}{\coloneqq}
\newcommand{\ceqq}{\coloneqq}
\newcommand{\Supp}{\mathrm{Supp}}
\title[Some FKG inequalities for stochastic processes]{Some FKG inequalities for stochastic processes}
\author[A. Legrand]{Alexandre Legrand}
\address{Université Claude Bernard Lyon 1, Institut Camille Jordan, UMR 5208. 43 boulevard du 11 novembre 1918, 69622 Villeurbanne Cedex, France}
\email{legrand@math.univ-lyon1.fr}
\keywords{FKG inequality, conditional association, stochastic processes, random walks.}
\date{}
\begin{document}

\begin{abstract}
This paper is interested in proving correlation inequalities of the FKG-type for various stochastic processes in continuous time. The pivotal tool which yields these correlation inequalities is an approximation with (possibly conditioned) Markov chains and random walks. In particular, we prove FKG inequalities for L\'evy processes, Bessel processes and several conditioned Brownian processes. As a side result, we also provide a necessary and sufficient condition for a random walk distribution in $\bbZ$ to satisfy the well-known ``FKG lattice condition''.
\end{abstract}

\maketitle

The \emph{FKG inequality} is a correlation inequality named after Fortuine, Kasteleyn and Ginibre~\cite{FKG71}, who proved a sufficient condition for its validity in the Ising and other lattice models (see also~\cite{Har60}). It has seen many applications and generalizations in statistical mechanics and general probability theory, see e.g.~\cite{BFKL97, Caf00, Hol74, KKO77, Lig05book, Mez97, Pres74} and references therein, or more recently~\cite{Lam22, Lam23-Dic, LO24, She05}; and it is a recurrent tool in proofs of stochastic domination results, as well as computations of lower bounds on first moments or event probabilities. 
It is also equivalent to the notion of \emph{``association''} of random variables, which has been extensively studied, see e.g.~\cite{EPW67}. In~\cite{Bar05}, Barbato proved that the FKG inequality holds for the Brownian motion, more precisely on the space $\cC([0,T],\R)$ of continuous functions from $[0,T]$ to $\R$ endowed with the canonical Wiener measure; and this statement in a functional space has already seen several applications: the reader can refer to~\cite{BM19, GH23, MM18, SSS14} for some examples. 

The present paper is interested in FKG inequalities on functional spaces, and aims to contribute to this literature in a few directions. First, we present a necessary and sufficient condition for a $d$-dimensional L\'evy process to satisfy the FKG inequality: not only this extends Barbato's result beyond the 1-dimensional Brownian case, but this also provides an alternate proof. Then, we show an FKG inequality for a class of conditional distributions of discrete-time Markov chain trajectories. Not only this allows us to prove that many other stochastic processes in continuous time satisfy the FKG inequality, such as Bessel processes and conditioned Brownian processes; this result also relates to several important issues around FKG inequalities, such as establishing necessary and sufficient conditions for the well-known \emph{FKG lattice condition} to hold, or obtaining FKG inequalities for conditional distributions ---a topic which has notably aroused interest in the domains of spin systems (see~\cite{Lig05} and references therein) as well as percolation~\cite{vdBHK06,vdBK01}.

\section{Main results}
\subsection{The FKG inequality and L\'evy processes}
Throughout this paper, $(E,\cE,\preceq)$ denotes a measurable set endowed with a measurable (partial) order, i.e.\ it satisfies $\{(u,v)\in E^2\,;u\preceq v\}\in\cE^{\otimes 2}$. A function $f:E\to\R$ is said to be \emph{increasing} (or non-decreasing) on $E$ if, for all $u,v\in E$, $u\preceq v$ implies $f(u)\leq f(v)$. This paper will mostly be interested in the following cases: 

$(i)$ $E=\R^d$, $\cE=\Bor(\R^d)$ and $\preceq$ is the \emph{product partial order}: i.e. for $u,v\in E$, $u\preceq v$ if and only if $u_i\leq v_i$ for all $1\leq i\leq d$,

$(ii)$ $E=\cC([0,T],\R)$, $\cE=\cE_\cC$ is the $\sigma$-algebra induced by the uniform convergence topology, and for $u,v\in E$, $u\preceq v$ if and only if $u(t)\leq v(t)$ for all $t\in[0,T]$.

$(iii)$ $E=\cD([0,T],\R^d)$ is the set of right-continuous with left-hand limits (rcll) functions on $[0,T]$ taking values in $\R^d$, $\cE=\cE_\cD$ is the $\sigma$-algebra induced by the Skorokhod metric, and for $u=(u_1,\ldots,u_d)$ and $v=(v_1,\ldots,v_d)$ in $\cD([0,T],\R^d)$, $u\preceq v$ if and only if $u_i(t)\leq v_i(t)$ for all $t\in[0,T]$, $1\leq i\leq d$.

\begin{definition}\label{def:fkg}
Let $\bP$ be a probability measure on $(E,\cE,\preceq)$, and $X$ be a random variable with law $\bP$: then $X$ is \emph{positively associated} (or \emph{positively correlated}) if for all increasing functions $f,g\in L^2(\bP)$, one has
\begin{equation}\label{eq:fkg}
\bE[f(X)g(X)] \;\geq\; \bE[f(X)]\,\bE[g(X)]\,. %\tag{FKG}
\end{equation}
\end{definition}
We shall abusively write that $\bP$ is positively associated whenever $X\sim\bP$ is. When $X$ is positively associated, we may also say that it ``satisfies the FKG inequality''. Our first result is a necessary and sufficient condition for a $d$-dimensional L\'evy process to satisfy the FKG inequality (in particular it encompasses~\cite[Theorem~4]{Bar05}).
\begin{theorem}\label{thm:fkg:levy}
Let $X=(X_t)_{t\in[0,T]}$ be a $d$-dimensional L\'evy process, $d\geq1$, started from $X_0=0$. Let $\nu$ denote its L\'evy measure on $\R^d$, and $\gS=(\gs_{i,j})_{1\leq i,j \leq d}$ the covariance matrix of its Gaussian component. Then, $X$ is positively associated in $\cD([0,T],\R^d)$ if and only if the following two conditions hold:

$(i)$ $\gs_{i,j}\geq0$ for all $1\leq i,j\leq d$,

$(ii)$ $\Supp(\nu) \subset (\R_+)^d\cup(\R_-)^d$. 

\noindent In particular, if $X$ is a 1-dimensional L\'evy process then it is positively associated.
\end{theorem}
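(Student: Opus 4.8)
The plan is to prove the two implications separately: necessity is an infinitesimal (short-time) computation, while sufficiency rests on a finite-dimensional reduction combined with the L\'evy--It\^o decomposition. Throughout I use the standard stability properties of association --- a nondecreasing image of an associated vector is associated, an independent family of associated vectors is associated, $Z\mapsto-Z$ preserves association, and association is preserved under weak convergence (for bounded continuous nondecreasing test functions, and then for all nondecreasing $f,g\in L^2$ by approximation).

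\emph{Necessity.} Assume $X$ is associated. For each fixed $t$ the coordinate maps $\omega\mapsto\omega_i(t)$ are increasing on $\cD([0,T],\R^d)$, so, writing $X^{(i)}$ for the $i$-th coordinate process, the pair $(X^{(i)}_t,X^{(j)}_t)$ is associated; it is infinitely divisible, with Gaussian covariances $\gs_{i,i},\gs_{i,j},\gs_{j,j}$ and L\'evy measure the image of $\nu$ under $x\mapsto(x_i,x_j)$. Fix bounded nondecreasing $C^2$ functions $\varphi,\psi$ with $\varphi(0)=\psi(0)=0$, put $h(x)=\varphi(x_i)\psi(x_j)$ and $C(t)=\Cov\big(\varphi(X^{(i)}_t),\psi(X^{(j)}_t)\big)$, so $C(t)\ge0$ and $C(0)=0$. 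With $\mathcal{L}$ the generator of $(X^{(i)},X^{(j)})$ one has $C'(0^+)=\mathcal{L}h(0)$ (the $t$-derivative at $0$ of $\bE[\varphi(X^{(i)}_t)]\,\bE[\psi(X^{(j)}_t)]$ vanishing since $\varphi(0)=\psi(0)=0$), and since $h(0)=0$ and $\nabla h(0)=0$, $\mathcal{L}h(0)$ reduces to the cross Gaussian term plus $\int h(x)\,\nu(\dd x)$: for $i\ne j$,
\[
0\ \le\ C'(0^+)\ =\ \gs_{i,j}\,\varphi'(0)\,\psi'(0)\ +\ \int_{\R^d}\varphi(x_i)\,\psi(x_j)\,\nu(\dd x)\,.
\]
For $(i)$, take $\varphi=\psi$ a smoothing of $u\mapsto(u\wedge\rho)\vee(-\rho)$: then $\varphi'(0)=1$ and $|\varphi(x_i)\psi(x_j)|\le|x|^2\wedge\rho^2$, whose $\nu$-integral tends to $0$ as $\rho\downarrow0$ (dominated convergence, using $\int(|x|^2\wedge1)\,\nu(\dd x)<\infty$); hence $\gs_{i,j}\ge0$. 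For $(ii)$, given $a>0>b$ choose $\varphi,\psi$ nondecreasing with $0\le\varphi\le1$ vanishing on $(-\infty,a]$ and $-1\le\psi\le0$ vanishing on $[b,\infty)$: then $\varphi'(0)=\psi'(0)=0$ while $\varphi(x_i)\psi(x_j)\le0$ everywhere, so the inequality forces $\int\varphi(x_i)\psi(x_j)\,\nu(\dd x)=0$, i.e.\ $\nu(\{x_i>a,\ x_j<b\})=0$. Letting $a\downarrow0$, $b\uparrow0$ and exchanging the signs gives $\nu(\{x_ix_j<0\})=0$ for every $i\ne j$, which is precisely $\Supp(\nu)\subset(\R_+)^d\cup(\R_-)^d$.

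\emph{Sufficiency.} Assume $(i)$ and $(ii)$. First reduce to the finite-dimensional marginals: for a dyadic grid $D_n\uparrow[0,T]$, the map $\pi_n$ sending $\omega$ to its c\`adl\`ag step-interpolation on $D_n$ is increasing on $\cD([0,T],\R^d)$, and $\pi_n\omega\to\omega$ in the Skorokhod topology; since $X$ is a.s.\ continuous at each fixed time, $f(\pi_nX)\to f(X)$ a.s.\ for bounded nondecreasing $f$, while $f\circ\pi_n$ is a nondecreasing function of $(X_t)_{t\in D_n}$. So the FKG inequality for $X$ follows by bounded convergence from that for the vectors $(X_t)_{t\in D_n}$, and extends to all nondecreasing $f,g\in L^2(\bP)$ by truncation --- this is the scheme of~\cite{Bar05} in the Brownian case. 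Now $(X_t)_{t\in D_n}$ is a nondecreasing (indeed linear, $0/1$-coefficient) image of the \emph{independent} increments $\Delta_k=X_{t_k}-X_{t_{k-1}}$, so it suffices to prove each $\Delta_k$ associated. Each $\Delta_k$ is infinitely divisible with Gaussian covariance $(t_k-t_{k-1})\gS$ (again satisfying $(i)$) and L\'evy measure $(t_k-t_{k-1})\nu$ (again satisfying $(ii)$); by L\'evy--It\^o it has the law of $G+J^++J^-$ with $G,J^+,J^-$ independent, $G$ Gaussian with nonnegative covariances, and $J^\pm$ infinitely divisible pure-jump vectors with L\'evy measure carried by $(\R_\pm)^d$. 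Here $J^+$ is a weak limit of finite sums $\sum_\ell N_\ell x_\ell$ (up to a deterministic translation) with $N_\ell$ independent Poisson and $x_\ell\in(\R_+)^d$ --- nonnegative linear images of the independent, hence associated, family $(N_\ell)_\ell$ --- so $J^+$ is associated, $-J^-$ likewise, hence $J^-$; and $G$ is associated by Pitt's theorem that positively correlated Gaussian vectors are associated (or, in the spirit of this paper, via a random-walk approximation). A sum of independent associated vectors being associated, $\Delta_k$ is associated. When $d=1$ both conditions hold trivially ($\gs_{1,1}$ is a variance, and $\R=\R_+\cup\R_-$), whence the final assertion.

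\emph{Main obstacle.} The genuinely hard step is the Gaussian part $G$: Pitt's theorem lies outside the ``FKG lattice condition'' world --- a Gaussian with nonnegative covariances need not have a density satisfying that condition --- and for $d\ge5$ such a Gaussian need not even be a nonnegative linear combination of independent Gaussians (its covariance matrix need not be completely positive), so this input cannot be bypassed; it is precisely what makes already the Brownian case of~\cite{Bar05} nontrivial. The remaining difficulty is routine: carefully justifying the Skorokhod-space reduction (Skorokhod convergence of the step-interpolation and the passage from bounded continuous to general nondecreasing $L^2$ functionals).
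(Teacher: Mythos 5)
Your proof is correct, and its outer skeleton coincides with the paper's: both arguments reduce association on $\cD([0,T],\R^d)$ to association of the finite-dimensional marginals --- equivalently, of the independent increments --- via an increasing interpolation/discretization map, preservation of association under increasing images and under weak convergence, and the product structure of the increments; this is exactly the content of Lemma~\ref{lem:fkg:levy} and Proposition~\ref{prop:fkg:rw}, where the paper invokes \cite[Theorem~23.14]{Kal21book} precisely at the point where you invoke Skorokhod convergence of the dyadic step-interpolation. Where you genuinely diverge is in the finite-dimensional characterization itself: the paper quotes \cite[Proposition~3]{HPAS98}, which already states that an infinitely divisible vector is ``infinitely associated'' (i.e.\ $X_t$ is associated for every $t$) if and only if $(i)$ and $(ii)$ hold, so neither your generator computation for necessity nor your L\'evy--It\^o decomposition $G+J^++J^-$ for sufficiency appears in the paper. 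Your version re-proves that cited input from scratch: the short-time derivative $C'(0^+)=\mathcal{L}h(0)\ge0$ argument, and Pitt's theorem combined with the compound-Poisson approximation of the one-sided jump parts, are essentially the arguments of \cite{Pit82}, \cite{Res88}, \cite{Sam95} and \cite{HPAS98}, and they check out (including the small technical points: $\nabla h(0)=0$ kills the drift and compensator terms, and the $|x|^2\wedge\rho^2$ domination handles the small jumps). What your route buys is self-containedness, and it makes explicit --- as you rightly stress --- that Pitt's theorem for positively correlated Gaussians is the irreducible hard ingredient, which the paper leaves hidden inside the citation; your remark that a doubly nonnegative covariance matrix need not be completely positive for $d\ge5$ is a correct explanation of why that ingredient cannot be bypassed by writing $G$ as a nonnegative linear image of independent variables. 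What the paper's route buys is brevity and a cleanly isolated intermediate statement (Lemma~\ref{lem:fkg:levy}: path-space association is equivalent to infinite association of $X_T$), which is arguably the only genuinely new content of the theorem relative to the prior literature.
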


The matter of association for infinitely divisible vectors has already been studied before, see notably~\cite{Pit82, Res88, Sam95} and the notion of \emph{``infinite association''} in~\cite{HPAS98}. However, to the author's knowledge, the association of L\'evy processes on the functional space $\cD([0,T],\R^d)$ has not been considered previously. 
%which is further discussed in Section~\ref{sec:comments} below. 
The proof of this theorem relies on the notion of infinite association, together with classical arguments on random walks and weak convergence of measures. Let us mention that, even though the proof displayed here is simpler than that of~\cite{Bar05} (which relies on non-trivial measure theory and the introduction of a new order relation on $\cC([0,T],\R^d)$) in the case of the Brownian motion, it seems that some of the arguments in~\cite{Bar05} could be extended to continuous random processes with non-stationary increments.

Let us also mention that, throughout this paper, we will only consider processes which are started from a deterministic initial value, e.g. $X_0=0\in\R^d$ almost surely; but all our results can be extended to generic (random) initial values through the following proposition. For a general random variable $X$ on some space $(E,\cE)$, we denote its law with $\bP_X$.

\begin{definition}\label{def:pogroup}
Let $E$ be a group and $\preceq$ be a partial order on $E$. We say that $(E,\preceq)$ is a partially ordered topological group (or \emph{pogroup}) if it satisfies the following:
%is a topological vector space endowed with a partial order $\preceq$ that satisfies:
\begin{itemize}
\item $E$ is an abelian topological group,
\item for all $x,y,z\in E$, $x\preceq y \Rightarrow x+z\preceq y+z$,
%\item for all $x\in E$ and $\lambda>0$, $x\succeq 0 \Rightarrow \lambda x\succeq 0$,
\item the non-negative cone $\cC_+\coloneqq\{x\in E, 0\preceq x\}$ is closed.
\end{itemize}
\end{definition}

\begin{proposition}\label{prop:initialvalue}
Let $(E,\Bor(E),\preceq)$ %$I$ a generic set and $E\subset (\R^d)^I$ 
be a pogroup endowed with its Borel $\gs$-algebra $\Bor(E)$. 
% measurable space stable by translation, e.g. $(\R^d)^n$, $\cC([0,T],\R^d)$ or $\cD([0,T],\R^d)$, $d\geq1$. 
Let $X,Y$ be random variables on $E$, %, $X_0$ a random vector in $\R^d$, and denote their laws $\bP_Y$ and $\bP_{X_0}$ respectively. 
and assume the following:
\begin{enumerate}
\item $\bP_{X}$ is positively associated,
\item for almost every $X$, $\bP_{Y}(\cdot|X)$ is positively associated,
\item for any $f:E\to\R$ bounded and increasing, $x\mapsto \bE[f(X+Y)|X=x]$ is increasing $\bP_{X}$-almost everywhere,
\end{enumerate}
\noindent then, $\bP_{X+Y}$ is positively associated.
\end{proposition}
%The definition of a partially ordered topological vector space is recalled in Section~\ref{sec:weakconv} below. 
Notice that assumptions~2 and~3 both hold if $X,Y$ are independent and $\bP_Y$ is positively associated. Moreover, if $E$ is either $\cC([0,T],\R^d)$ or $\cD([0,T],\R^d)$ and $X$ is an a.s.\ constant process, i.e. $X_t=X_0$ for all $t\in[0,T]$, then Proposition~\ref{prop:initialvalue} amounts to extending the FKG inequality from a process $Y$ to a copy started from a different initial value.

\subsection{Conditional association of Markov chains}
The second result of this paper is that many Markov chain (or random walk) distributions, after conditioning on a quite generic event, are positively associated. 
In particular, we deduce from this the association of several stochastic processes in continuous time which can be approximated by well-chosen Markov chains. Let us introduce the following definitions. 

\begin{definition}\label{def:maxminstable}
We say that an event $A\in\cE$ is \emph{max/min-stable} if for $u,v\in A$, one has $u\vee v\in A$ and $u\wedge v\in A$, where $u\vee v$ (resp. $u\wedge v$) denotes the smallest common upper bound (resp. largest lower bound) of $u$ and $v$ in $E$.
\end{definition}
In this paper we always assume that for all $u,v\in E$, $u\vee v$ and $u\wedge v$ are well-defined in $E$ ---which is the case if $E=\R^d$, $E=\cC([0,T],\R)$ or $E=\cD([0,T],\R^d)$.

\begin{definition}\label{def:crossings}
Let $\bbp\ceqq (p_{x,y})_{x,y\in\cX}$ be a probability transition kernel on some countable subset $\cX\subset\bbR$ (i.e. $p_{x,y}\geq0$ and for all $x\in\cX$, $\sum_yp_{x,y}=1$). For $\cX_1\subset\cX$, we say that $\bbp$ has \emph{$\cX_1$-unfavorable crossings} (abbreviated $\cX_1$-u.c.) if for all $u_1,v_1\in\cX_1$, $u_2,v_2\in\cX$ one has,
\begin{equation}\label{eq:def:crossings}
p_{u_1, u_2}\times p_{v_1, v_2} \,\leq\, p_{u_1\vee v_1, u_2\vee v_2}\times p_{u_1\wedge v_1, u_2\wedge v_2}\,.
\end{equation}
For $\bP$ a (purely atomic) probability measure on $\R$, we say that $\bP$ has $\cX_1$-u.c.\ if the transition kernel of the random walk with increment distribution $\bP$ has $\cX_1$-u.c..
\end{definition}

For the sake of clarity, we now provide some examples of Markov chains on $\cX\ceqq\bbZ$ which have unfavorable crossings. If $\bP$ is an atomic measure on some measurable space $E$, we write abusively $\bP(u)\ceqq\bP(\{u\})$ for $u\in E$.
\begin{lemma}\label{lem:examples}
$(i)$ \emph{Discrete Laplace random walk:} let $\beta>0$ and $\bP(z)\ceqq Ce^{-\beta |z|}\ind_{z\in\bbZ}$, where $C=C(\beta)$ is a normalizing constant. Then $\bP$ has $\bbZ$-u.c. (i.e. the transition kernel of the random walk with increment distribution $\bP$ has $\bbZ$-u.c.). 

$(ii)$ \emph{Markov chain with unit steps:} assume $p_{i,i+1}+p_{i,i-1}=1$ for all $i\in\bbZ$. Then $\bbp$ has both $(2\bbZ)$-u.c. and $(2\bbZ+1)$-u.c..
\end{lemma}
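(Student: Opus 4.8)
The plan is to verify the inequality~\eqref{eq:def:crossings} directly in each case by exploiting the special structure of the transition kernel.

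For part $(i)$, the random walk with increment distribution $\bP(z)=Ce^{-\beta|z|}\ind_{z\in\bbZ}$ has transition kernel $p_{x,y}=\bP(y-x)=Ce^{-\beta|y-x|}$. Taking logarithms, it suffices to show that for all $u_1,v_1,u_2,v_2\in\bbZ$,
\begin{equation*}
|u_2-u_1|+|v_2-v_1| \;\geq\; |u_2\vee v_2 - u_1\vee v_1| + |u_2\wedge v_2 - u_1\wedge v_1|\,.
\end{equation*}
This is a purely combinatorial statement about real numbers (the integrality plays no role), and I would prove it by a short case analysis according to the relative order of $u_1,v_1$ and of $u_2,v_2$. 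Concretely, writing $a=u_1\wedge v_1$, $b=u_1\vee v_1$, $c=u_2\wedge v_2$, $d=u_2\vee v_2$, the right-hand side is $|d-b|+|c-a|$, while the left-hand side is $|c-a|+|d-b|$ if the pairing of mins/maxes on the two coordinates is "aligned" (i.e. $u_1=a \iff u_2=c$), in which case equality holds; and otherwise the left-hand side is $|d-a|+|c-b|$, and one checks $|d-a|+|c-b|\geq|d-b|+|c-a|$ whenever $a\leq b$ and $c\leq d$ (this is the elementary fact that, among the two ways of pairing two increasing pairs, the "crossed" pairing gives the larger sum of absolute differences — a discrete rearrangement-type inequality, provable by examining whether the intervals $[a,b]$ and $[c,d]$ are nested, disjoint, or overlapping).

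For part $(ii)$, fix $\cX_1=2\bbZ$ (the case $2\bbZ+1$ is identical by translation). Let $u_1,v_1\in 2\bbZ$ and $u_2,v_2\in\bbZ$. Since from an even site the chain can only move to an odd site, $p_{u_1,u_2}$ and $p_{v_1,v_2}$ are nonzero only when $u_2,v_2$ are odd and $|u_2-u_1|=|v_2-v_1|=1$. If either factor on the left of~\eqref{eq:def:crossings} vanishes there is nothing to prove (the right-hand side is a product of nonnegative numbers), so assume $u_2\in\{u_1\pm1\}$ and $v_2\in\{v_1\pm1\}$. I would then split according to whether $u_1\leq v_1$ or $v_1\leq u_1$; by symmetry of~\eqref{eq:def:crossings} under swapping $(u_1,u_2)\leftrightarrow(v_1,v_2)$ it suffices to treat $u_1\leq v_1$, so $u_1\wedge v_1=u_1$, $u_1\vee v_1=v_1$. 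One then checks that $u_2\wedge v_2$ and $u_2\vee v_2$ are each at distance $1$ from $u_1$ and $v_1$ respectively (using $u_1\le v_1$ and the fact that $u_2,v_2$ are each within $1$ of their respective starting points, so $u_2\le v_2+1$ wait — more carefully, $u_2\le u_1+1\le v_1+1$ and $u_2\ge u_1-1$, similarly for $v_2$, and a direct enumeration of the (at most four) sub-cases $u_2=u_1\pm1$, $v_2=v_1\pm1$ shows $\{u_2\wedge v_2,\,u_2\vee v_2\}$ are valid unit-step targets from $\{u_1,v_1\}$), so that the right-hand side of~\eqref{eq:def:crossings} equals $p_{u_1,u_1\wedge v_1\text{-target}}\times p_{v_1,\ldots}$ with all four one-step probabilities being among $\{p_{i,i+1},p_{i,i-1}\}$; matching them up yields the claim, in several cases with equality.

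I expect the only real obstacle to be the bookkeeping in the case analysis of $(i)$ — in particular isolating the clean statement that crossed pairings of monotone pairs maximize the sum of absolute differences, and handling the "aligned" sub-case where one should get equality rather than strict inequality. Part $(ii)$ is essentially immediate once one observes that the left-hand side is zero unless all steps are unit steps, and that under $u_1\le v_1$ the operations $\vee,\wedge$ interact trivially with the unit-step constraint. Throughout, I would emphasize that no use is made of integrality in $(i)$ beyond the support of $\bP$, and that $(ii)$ uses only the parity structure of the state space together with the single-step support of the kernel.
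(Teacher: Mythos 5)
Your argument is correct. The paper itself leaves this lemma as an exercise, remarking only that part $(i)$ is a consequence of Proposition~\ref{prop:rw:fkgcharacterization}: $k\mapsto \log p_k=\log C-\beta|k|$ is concave on $\bbZ$ (here $a=1$), so the implication $(iii)\Rightarrow(ii)$ there gives $\bbZ$-u.c. Your direct verification is that same computation in disguise: after taking logarithms, the inequality
\[
|u_2-u_1|+|v_2-v_1|\;\geq\;|u_2\vee v_2-u_1\vee v_1|+|u_2\wedge v_2-u_1\wedge v_1|
\]
reduces (with $a=u_1\wedge v_1\leq b=u_1\vee v_1$, $c=u_2\wedge v_2\leq d=u_2\vee v_2$) to $|d-a|+|c-b|\geq|d-b|+|c-a|$, which is exactly the convexity of $|\cdot|$ applied to two pairs of arguments with equal sums; so your route buys a self-contained elementary proof at the cost of redoing the log-concavity step by hand. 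Your case analysis can be compressed: $f(t)=|d-t|-|c-t|$ is non-increasing when $c\leq d$, and evaluating at $t=a\leq t=b$ gives the crossed-pairing inequality in one line, with the aligned case being the trivial equality.

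For $(ii)$ the structure is right, but you should make the use of parity explicit, since the chain of inequalities you wrote ($u_2\leq u_1+1\leq v_1+1$) does not by itself rule out a crossing. After discarding the cases where the left-hand side of~\eqref{eq:def:crossings} vanishes, one has $u_2=u_1\pm1$, $v_2=v_1\pm1$ with $u_1,v_1\in2\bbZ$; if $u_1<v_1$ then $u_1\leq v_1-2$, hence $u_2\leq u_1+1\leq v_1-1\leq v_2$, so $u_2\vee v_2=v_2$ and $u_2\wedge v_2=u_2$ and both sides of~\eqref{eq:def:crossings} coincide. The gap of $2$ is precisely where $\cX_1=2\bbZ$ enters, and it is what fails for $\cX_1=\bbZ$ (cf.\ Example~\ref{ex:counterexamples}.$(ii)$ with $\gamma=0$). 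The remaining case $u_1=v_1$ gives equality immediately since then $\{u_2\wedge v_2,u_2\vee v_2\}=\{u_2,v_2\}$.
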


We also present some counter-examples.

\begin{example}\label{ex:counterexamples}
$(i)$ %\emph{Heavy tails:}
Let $\alpha>1$ and $\bP(z)\ceqq C(1+|z|)^{-\alpha}\ind_{z\in\bbZ}$, where $C=C(\alpha)$ is a normalizing constant. Then $\bP$ \emph{does not} have $\bbZ$-u.c..

$(ii)$ Let $\gamma\in[0,1]$, and $\bP\ceqq \frac{1-\gamma}2(\gd_{-1}+\gd_{+1})+\gamma\gd_0$. Then $\bP$ has $\bbZ$-u.c. \emph{if and only if} $\gamma\geq1/3$.
\end{example}
The proofs of Lemma~\ref{lem:examples} and Example~\ref{ex:counterexamples} are quite straightforward and are left as an exercise to the reader (nonetheless let us mention that, with the exception of Lemma~\ref{lem:examples}.$(ii)$, all those statements are consequences of Proposition~\ref{prop:rw:fkgcharacterization} below). 

We may now state our central result. Recall that for any probability measure $\bP$ on $(E,\cE)$ and $A\in\cE$ such that $\bP(A)>0$, the measure $\bP$ conditioned to $A$ is defined with $\bP(B|A)\ceqq \bP(B\cap A)/\bP(A)$, $B\in\cE$. We have the following.

\begin{theorem}\label{thm:fkg:markovcond}
Let $X^n=(X_k)_{k=1}^n$ be the first $n$ steps of a Markov chain on some set $\cX\subset\R$ started from some $x_0\in\cX$. Assume $\cX$ is countable, locally finite, and let $\bbp$ denote the transition kernel of $X^n$. Let $A\subset\cX^{n}$, and assume the following:

(H1) for all $k\in\{0,\ldots,n-1\}$, $\bbp$ has $\Supp(\bP_{X_{k}})$-u.c.,

(H2) $\bP_{X^n}(A)>0$ and $A$ is max/min-stable,

\noindent then, $\bP_{X^n}(\cdot|A)$ is positively associated in $\R^n$.
\end{theorem}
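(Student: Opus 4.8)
The plan is to verify the \emph{FKG lattice condition} for the conditioned measure $\mu := \bP_{X^n}(\cdot\,|\,A)$ and then invoke the classical FKG theorem~\cite{FKG71,Hol74}, using a truncation to pass from $\cX^n$ (whose relevant support may be infinite) down to finite distributive lattices.

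First I would record that, writing $\rho(x_1,\dots,x_n) := \prod_{k=0}^{n-1} p_{x_k,x_{k+1}}$ with $x_0$ the deterministic starting point, one has $\bP_{X^n}(\{x\}) = \rho(x)$, hence $\mu(\{x\}) = \rho(x)\ind_A(x)/\bP_{X^n}(A)$. The heart of the argument is then to show that for all $x,y\in\cX^n$,
\begin{equation}\label{eq:latt}
\rho(x\vee y)\,\ind_A(x\vee y)\cdot\rho(x\wedge y)\,\ind_A(x\wedge y)\ \geq\ \rho(x)\,\ind_A(x)\cdot\rho(y)\,\ind_A(y),
\end{equation}
which, up to the common factor $\bP_{X^n}(A)^{-2}$, is exactly the FKG lattice condition for $\mu$. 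For the indicator factors this is precisely (H2): if $x,y\in A$ then $x\vee y,x\wedge y\in A$. For the density factors it suffices to treat the case $\rho(x),\rho(y)>0$ (otherwise the right-hand side vanishes); then $p_{x_j,x_{j+1}}>0$ for all $j$, so the partial products $p_{x_0,x_1}\cdots p_{x_{k-1},x_k}$ are positive and $x_k\in\Supp(\bP_{X_k})$ for every $k$ (likewise for $y$, and trivially $x_0=y_0\in\Supp(\bP_{X_0})$). Applying (H1) coordinatewise with $u_1=x_k$, $v_1=y_k$, $u_2=x_{k+1}$, $v_2=y_{k+1}$ gives
\begin{equation*}
p_{x_k,x_{k+1}}\,p_{y_k,y_{k+1}}\ \leq\ p_{x_k\vee y_k,\,x_{k+1}\vee y_{k+1}}\,p_{x_k\wedge y_k,\,x_{k+1}\wedge y_{k+1}}\qquad(0\le k\le n-1),
\end{equation*}
and multiplying these $n$ inequalities yields $\rho(x)\rho(y)\le\rho(x\vee y)\rho(x\wedge y)$, since $(x\vee y)_k=x_k\vee y_k$, $(x\wedge y)_k=x_k\wedge y_k$, and $x_0\vee y_0=x_0\wedge y_0=x_0$. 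In particular, applying~\eqref{eq:latt} to $x,y\in\Supp(\mu)$ shows that $\Supp(\mu)$ is a sublattice of $\cX^n$.

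Next I would pass to finite lattices. Since $\cX$ is locally finite, $S_N := \Supp(\mu)\cap\big([-N,N]\cap\cX\big)^n$ is a finite sublattice of $\cX^n$ (the intersection of the sublattice $\Supp(\mu)$ with a box), and $\cX^n$, being a finite product of chains, is distributive; hence each $S_N$ is a finite distributive lattice. For $N$ large one has $\mu(S_N)>0$, and $\mu_N:=\mu(\cdot\,|\,S_N)$ is strictly positive on $S_N$ and still satisfies~\eqref{eq:latt} there (the factor $\mu(S_N)^{-2}$ cancels). The FKG theorem~\cite{FKG71,Hol74} then gives that $\mu_N$ is associated on $S_N$, hence satisfies~\eqref{eq:fkg}, since any increasing $f:\R^n\to\R$ restricts to an increasing function on $S_N$. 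Finally $\mu(S_N)\uparrow1$ as $N\to\infty$, so $\mu_N\to\mu$ in total variation; passing to the limit in~\eqref{eq:fkg} (legitimate for bounded increasing $f,g$) and then approximating a general increasing $f\in L^2(\mu)$ by its truncations $(f\wedge M)\vee(-M)$ — which converge to $f$ in $L^2(\mu)$ and remain increasing — extends~\eqref{eq:fkg} to all increasing $f,g\in L^2(\mu)$.

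I expect the substantive step to be the verification of~\eqref{eq:latt}: hypothesis (H1) is tailored so that the one-step transition factors obey a per-coordinate FKG-type inequality which tensorizes along the trajectory, and the only delicate points are (a) reducing to pairs $x,y$ for which the relevant $x_k,y_k$ lie in $\Supp(\bP_{X_k})$ so that (H1) is applicable, and (b) using (H2) to keep $x\vee y$ and $x\wedge y$ inside $A$; jointly these guarantee that $\Supp(\mu)$ is a sublattice, which is what makes the finite FKG theorem applicable. The remaining ingredients — the locally-finite truncation, the total-variation limit, and the $L^2$-density argument — are routine.
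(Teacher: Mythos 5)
Your proof is correct, and its core coincides with the paper's: you verify the FKG lattice condition \eqref{eq:fkgcondition} for the conditioned measure by tensorizing the one-step inequality \eqref{eq:def:crossings} from (H1) along the trajectory (this is Lemma~\ref{lem:fkgcondition:H1}, including the same reduction to paths with $\rho(x),\rho(y)>0$ so that $x_k\in\Supp(\bP_{X_k})$ and (H1) is applicable) and you absorb the conditioning on $A$ via max/min-stability (Lemma~\ref{lem:fkgcondition:cond}). Where you diverge is the last step, ``lattice condition implies association on a countable locally finite set'' (Lemma~\ref{lem:conclusion:inequality}): the paper reweights $\bP$ by $g/\bE g$ and invokes Preston's Holley-type theorem \cite[Theorem~3]{Pres74} directly on the countable, locally finite support (with a footnote constructing the requisite $\sigma$-finite counting measure), whereas you first note that $\Supp(\mu)$ is a sublattice of the distributive lattice $\cX^n$, truncate to the finite distributive lattices $S_N$, apply the original finite FKG theorem \cite{FKG71}, and pass to the limit in total variation before extending from bounded to $L^2$ increasing functions. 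Your route is more elementary and self-contained (only the finite FKG theorem is needed) at the price of the truncation and limiting bookkeeping; the paper's is shorter but leans on the countable version of Preston's result. One small point worth keeping explicit in your write-up: the observation that $\Supp(\mu)$ is closed under $\vee$ and $\wedge$ is genuinely needed for $S_N$ to be a lattice, and you correctly derive it from your lattice inequality rather than assuming it.
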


\begin{remark}\label{rem:nocond}
One can clearly take $A\ceq\cX^n$ in the theorem, hence $\bP_{X^n}$ is positively associated as soon as $\bbp$ satisfies (H1).
\end{remark}

\begin{example}\label{ex:h1}
$(i)$ Let $X$ be a random walk on $\bbZ$ with increment distribution $\bP$. If $\bP$ has $\bbZ$-u.c., then (H1) holds. 
%or if $\bP(\{-1,+1\})=1$ (recall Lemma~\ref{lem:examples}.$(ii)$), 

$(ii)$ Let $X$ be a Markov chain on $\bbZ$ such that $p_{i,i+1}+p_{i,i-1}=1$ for all $i\in\bbZ$: then (H1) holds (recall Lemma~\ref{lem:examples}.$(ii)$). 
\end{example}
\begin{remark}\label{rem:H1}
One could be tempted to replace the assumption (H1) with ``$\bbp$ has $\cX$-u.c.'', however the latter is a strictly stronger assumption since it excludes some periodic Markov chains (e.g. the simple random walk: compare Lemma~\ref{lem:examples}.$(ii)$ with Example~\ref{ex:counterexamples}.$(ii)$ for $\gamma=0$).
\end{remark}
We then deduce from Theorem~\ref{thm:fkg:markovcond} that several 1-dimensional stochastic processes are positively associated.

\begin{corollary}\label{corol:fkgcond:brown}
Let $E=\cC([0,T],\R)$ for some $T>0$. % and let $\cE_\cC$ be the usual $\sigma$-algebra on $E$.
The following Brownian processes $(B_t)_{t\in[0,T]}$, started from $B_0=0$, are positively associated in $\cC([0,T],\R)$:

$(1)$ the Brownian bridge, $B_T=x$, $x\in\R$,

$(2)$ the Brownian excursion, $B_T=0$, $B_t\geq0$ for all $t\in[0,T]$,

$(3)$ the Brownian meander, $B_t\geq0$ for all $t\in[0,T]$,

$(4)$ the Brownian motion conditioned to remain in some (time-dependent) interval, $B_t\in[a(t),b(t)]$ for all $t\in[0,T]$, where $a,b\in\cD([0,T],[-\infty,+\infty])$ satisfy $\bP_\go(\forall\, t\in[0,T],\,\go_t\in[a(t),b(t)])>0$, for $\bP_\go$ the canonical Wiener measure on $\cC([0,T],\R)$.
\end{corollary}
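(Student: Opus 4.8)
The strategy is to realize each of these conditioned Brownian processes as a limit (in the sense of weak convergence on $\cC([0,T],\R)$) of conditioned simple random walks, and to apply Theorem \ref{thm:fkg:markovcond} at the discrete level together with a stability-under-weak-limits argument for association. First I would fix the skeleton: for $N\geq1$ let $S^N=(S^N_k)_{k=0}^{N}$ be a simple random walk on $\bbZ$ (or on a suitably rescaled lattice $N^{-1/2}\bbZ$) started from $0$, with $p_{i,i+1}=p_{i,i-1}=1/2$; by Example \ref{ex:h1}.$(ii)$ (or Lemma \ref{lem:examples}.$(ii)$) this chain satisfies (H1). Linearly interpolating and Donsker-rescaling gives a process $\widetilde S^N\in\cC([0,T],\R)$ whose law converges weakly to that of Brownian motion, and the corresponding conditioned laws converge to the conditioned Brownian laws in cases $(1)$–$(4)$: these are all classical invariance principles (Brownian bridge: condition $S^N_N=\lfloor x\sqrt N\rfloor$; excursion: condition $S^N_N=0$ and $S^N_k\geq0$ for all $k$; meander: condition $S^N_k\geq0$ for all $k$; the time-dependent interval: condition $S^N_k\in[a^N_k,b^N_k]$ for appropriate integer approximations of $a,b$, using the assumed positivity of the Wiener probability of the constraint to guarantee $\bP(A)>0$ for $N$ large).

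The second ingredient is that each conditioning event is max/min-stable in $\bbZ^{N+1}$. For case $(1)$ the event $\{S_N = m\}$ is an affine slice and is trivially closed under coordinatewise $\vee$ and $\wedge$ (since $(u\vee v)_N=u_N\vee v_N=m$ when $u_N=v_N=m$). For cases $(2)$–$(4)$ the events are of the form $\{a_k\leq S_k\leq b_k \text{ for all }k\}$, possibly together with an endpoint constraint $S_N=m$; such "tube" events are manifestly max/min-stable because $a_k\leq u_k,v_k\leq b_k$ forces $a_k\leq u_k\wedge v_k$ and $u_k\vee v_k\leq b_k$. Hence (H2) holds at every scale $N$ (for $N$ large enough in the case of $(2)$, $(4)$ so that the probability is positive), and Theorem \ref{thm:fkg:markovcond} yields that each conditioned discrete law is associated in $\bbZ^{N+1}$, hence the interpolated law is associated in $\cC([0,T],\R)$ (a linear interpolation map is increasing and continuous, and association is preserved under pushforward by an increasing measurable map — a point I would state as a small lemma or quote from the earlier part of the paper).

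The final step is passage to the limit: if $\bP_N\Rightarrow\bP$ weakly on $\cC([0,T],\R)$ and each $\bP_N$ is associated, then $\bP$ is associated. For bounded continuous increasing $f,g$ this is immediate from the definition of weak convergence applied to \eqref{eq:fkg}; the general case $f,g\in L^2(\bP)$ follows by a standard truncation/approximation argument (truncate to $f\wedge M\vee(-M)$, which is still increasing, then approximate an increasing $L^2$ function by bounded continuous increasing ones). I would isolate this as a general proposition — "association is closed under weak convergence" — near Theorem \ref{thm:fkg:markovcond}, since it is also what powers the proof of Theorem \ref{thm:fkg:levy}. The main obstacle is bookkeeping in the invariance principles: one must check that the specific discretizations of the constraints in $(2)$–$(4)$ do converge to the intended conditioned Brownian laws (e.g. that conditioning the walk to stay nonnegative and return to $0$ really converges to the normalized Brownian excursion, with the right treatment of the $O(1)$ boundary layer and of parity issues when $N$ and the endpoint have mismatched parity), and that the interval endpoints $a,b\in\cD$ can be approximated by lattice-valued step functions without destroying positivity of the conditioning probability; these are known but require care. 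Everything else — max/min-stability, (H1), and the weak-limit stability of association — is routine given the tools already established.
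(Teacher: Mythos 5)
Your proposal is correct and follows essentially the same route as the paper: discretize by a lattice walk satisfying (H1), check that each conditioning event is max/min-stable so Theorem~\ref{thm:fkg:markovcond} applies, and pass to the limit via the classical invariance principles and the stability of association under weak convergence (which the paper isolates as Lemma~\ref{lem:fkg:convergence}). The only difference is that the paper uses the discrete Laplace walk of Lemma~\ref{lem:examples}.$(i)$ precisely to sidestep the parity/support bookkeeping you flag for the simple random walk; the paper's own footnote notes that the simple walk works too once those issues are handled.
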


\begin{corollary}\label{corol:fkgcond:bessel}
Let $E=\cC([0,T],\R_+)$. The Bessel process $(X_t)_{t\in[0,T]}$ of index $\nu\in(-1,+\infty)$, that is with generator $\frac12\frac{\dd^2}{\dd x^2} + \frac{2\nu+1}{2x}\frac{\dd}{\dd x}$, and started from some $x_0\geq 0$, is positively associated.
\end{corollary}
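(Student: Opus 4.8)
The plan is to deduce Corollary~\ref{corol:fkgcond:bessel} from Theorem~\ref{thm:fkg:markovcond} (together with Proposition~\ref{prop:initialvalue}) by exhibiting the Bessel process as a weak limit of suitably rescaled Markov chains on a locally finite subset of $\R_+$ satisfying (H1), and then pushing association through the limit. Concretely, I would approximate the Bessel process of index $\nu$ by a sequence of birth-and-death-type chains: for each $m\in\bbN$, work on the lattice $\cX_m\ceqq \frac1m\bbZ_{\geq0}$ (which is countable and locally finite), and define a discrete generator that is a consistent discretization of $\tfrac12\frac{\dd^2}{\dd x^2}+\frac{2\nu+1}{2x}\frac{\dd}{\dd x}$, e.g. with transition probabilities $p_{x,x+1/m}$ and $p_{x,x-1/m}$ (and a holding probability) chosen so that, after the diffusive time rescaling $t\mapsto \lfloor m^2 t\rfloor$ steps, the rescaled chain converges weakly in $\cC([0,T],\R_+)$ to the Bessel process started from (an approximation of) $x_0$. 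Such discretizations are classical; one must be slightly careful near the boundary $x=0$, where the drift is singular, but for $\nu>-1$ the process is well-defined (reflecting/entrance behavior depending on $\nu$), and one can either truncate near $0$ and pass to the limit, or use a known explicit birth-and-death approximation of Bessel processes.

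The two substantive points are (a) verifying hypothesis (H1) for these chains and (b) justifying that association is preserved under the weak limit. For (a): at each step the support of $X_k$ is a finite subset of $\cX_m$, so it suffices to check the u.c. inequality~\eqref{eq:def:crossings} for the unit-step kernel. Because the chain has steps in $\{-1/m,0,+1/m\}$, the relevant case analysis is small. In fact, the key structural fact I would isolate is that for a nearest-neighbour chain on $\tfrac1m\bbZ$ the condition~\eqref{eq:def:crossings} on $\cX_1=\Supp(\bP_{X_k})$ reduces — whenever $u_1\neq v_1$ — to a log-concavity-type inequality $p_{i,i+1}\,p_{i+1,i}\le p_{i,i}\,p_{i+1,i+1}$ and $p_{i,i+1}\,p_{i+1,i-1}\le \text{(product of diagonal-ish terms)}$ relating the birth/death/holding rates at neighbouring sites (compare the mechanism behind Lemma~\ref{lem:examples}.$(ii)$ and Example~\ref{ex:h1}.$(ii)$, which already handle the pure unit-step case with no holding); with a holding probability present one gets a genuine constraint, and I would choose the discretization so that this constraint is met for all $m$ and all $\nu>-1$ (this is where the precise form of the birth/death/holding probabilities matters, and where I would do the one honest computation). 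If the naive symmetric discretization fails the inequality near the boundary, one falls back on Example~\ref{ex:h1}.$(ii)$ by using a two-step (unit-step, no-holding) construction whose time-rescaled limit is still the Bessel process.

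For (b): association is a closed property under weak convergence on a nice space. Precisely, if $\bP_m\Rightarrow\bP$ on $\cC([0,T],\R)$ and each $\bP_m$ is associated, then $\bP$ is associated — it suffices to test~\eqref{eq:fkg} on bounded continuous increasing $f,g$ (these are dense enough in the sense needed: a general increasing $L^2$ function can be approximated by truncation and by bounded continuous increasing functions, using monotone/dominated convergence and the fact that increasing functions on $\cC([0,T],\R)$ with the sup order can be regularized while preserving monotonicity), and for such $f,g$ both sides pass to the limit by the definition of weak convergence. I expect the paper has (or will state) exactly such a ``stability of association under weak limits'' lemma in the Lévy-process section, since the same idea underpins the proof of Theorem~\ref{thm:fkg:levy}; I would cite it. Finally, to handle the initial value: either absorb $x_0$ into the lattice approximation directly (choosing $x_0^{(m)}\in\cX_m$ with $x_0^{(m)}\to x_0$), or, if one prefers to treat a general starting point abstractly, invoke Proposition~\ref{prop:initialvalue} with $X_0$ the deterministic point $x_0$ (trivially associated) and $Y$ the centered process — though for Bessel the translation structure is awkward because of the boundary, so the lattice route is cleaner.

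The main obstacle I anticipate is the boundary behaviour at $x=0$: for $\nu\in(-1,0)$ the origin is reached and the process reflects, and the singular drift $\frac{2\nu+1}{2x}$ is not straightforwardly discretized there while simultaneously keeping both the weak-convergence property and the u.c. inequality~\eqref{eq:def:crossings}. My fallback plan is to first prove association for the Bessel process killed/reflected away from a small interval $[0,\eps]$ (where the drift is bounded, so a clean discretization satisfying (H1) exists), and then let $\eps\downarrow0$ using again stability of association under weak convergence — reducing the whole argument to the regime where the generator is nice.
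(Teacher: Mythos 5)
Your proposal is correct and follows essentially the same strategy as the paper: approximate by a nearest-neighbour Markov chain on a rescaled lattice, verify (H1), and pass to the weak limit via the stability lemma (Lemma~\ref{lem:fkg:convergence}). The paper goes directly for what you call your fallback --- a unit-step chain on $\bbN$ with $p_{i,i+1}=1-p_{i,i-1}=\frac12\bigl(1+\frac{2\nu+1}{2i}+o(i^{-1})\bigr)$ for $i\geq1$ and $p_{0,1}=1$, with no holding --- so that (H1) is automatic from Lemma~\ref{lem:examples}.$(ii)$, and both the boundary behaviour and the convergence to the Bessel process for all $\nu>-1$ are handled by citing Lamperti's invariance principle, which sidesteps the holding-probability computation you were (rightly) worried about.
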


The case of Bessel processes with indices $\nu\leq -1$ is discussed in Remark~\ref{rem:bessel} below. Let us clarify that these corollaries of Theorem~\ref{thm:fkg:markovcond} are not meant to be exhaustive, but illustrative; the reader may think of many similar applications other than Corollaries~\ref{corol:fkgcond:brown},~\ref{corol:fkgcond:bessel}.

\subsection{The FKG lattice condition}\label{sec:compl}
Let us comment on Theorem~\ref{thm:fkg:markovcond}. 
The matter of proving association (or other correlation inequalities) for conditional distributions has been investigated in various settings, see e.g.~\cite{vdBHK06, vdBK01, Lig05} or even~\cite[Lemma~2.3]{MM18}. However, many of those results make strong assumptions on the conditioning set $A\subset E$. In Theorem~\ref{thm:fkg:markovcond}, the assumption (H2) on the set $A$ is minimal, whereas the crucial assumption (H1) only involves the transition kernel of the Markov chain \emph{without conditioning}. 

The assumption (H1) is closely related to the following correlation property, called the \emph{``FKG (lattice) condition''}.
\begin{definition}\label{def:fkgcondition}
Let $(E,\cE,\preceq, \bP)$ be a partially ordered probability space, and assume $\bP$ is a purely atomic measure.\footnote{The FKG condition can also be formulated for some non-atomic measures, but for the sake of conciseness we do not consider this case in the present paper.} We say that $\bP$ satisfies the \emph{FKG (lattice) condition} if,
\begin{equation}\label{eq:fkgcondition}
\bP(u\vee v)\bP(u\wedge v) \,\geq\, \bP(u)\bP(v)\,, \qquad\text{for }\bP\text{-a.e.\ }u,v\in E\,.
\end{equation}
\end{definition}
The FKG condition is central to the proof of Theorem~\ref{thm:fkg:markovcond}. Indeed, we prove in Lemma~\ref{lem:fkgcondition:H1} below that, in the case of a Markov chain trajectory $X^n$ in $\cX\subset\R$, (H1) is a sufficient condition for $\bP_{X^n}$ to satisfy~\eqref{eq:fkgcondition}; and, in general, the FKG lattice condition implies the FKG inequality (see notably~\cite{FKG71, Pres74}, and Lemma~\ref{lem:conclusion:inequality} below). However, those two properties are not equivalent: counter-examples in various settings have been established in the literature, see e.g.~\cite{FDS88, Lig05}. 

Determining necessary and sufficient conditions for a class of probability measures to satisfy the FKG condition is a difficult problem in general, see~\cite{JDSV82}. In the case of random walks, it is well known that any 1-dimensional random walk satisfies the FKG inequality (see Proposition~\ref{prop:fkg:rw} below); and in this paper we give a full characterization of 1-dimensional lattice random walks which satisfy the FKG condition.

\begin{proposition}\label{prop:rw:fkgcharacterization}
Let $\bP\eqqcolon (p_k)_{k\in\bbZ}$ be a (non-trivial) probability measure on $\bbZ$. Let $a\ceqq \mathrm{gcd}(w_1-w_2;w_1,w_2\in\Supp(\bP))$: in particular, there exists $b\in\bbZ$ such that $\Supp(\bP)\subset a\bbZ+b$. Let $X=(X_k)_{k\geq1}$ be a random walk with increment distribution $\bP$ started from some $x_0\in\bbZ$. Then, the following three statements are equivalent.

$(i)$ for all $n\in\N$, $X^n\coloneqq(X_k)_{k=1}^n$ satisfies the FKG lattice condition~\eqref{eq:fkgcondition},

$(ii)$ for all $b'\in\bbZ$, $\bP$ has $(a\bbZ+b')$-u.c.,

$(iii)$ the function $k\mapsto p_k$ is log-concave on $a\bbZ+b$.
\end{proposition}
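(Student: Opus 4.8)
The plan is to prove the equivalence $(i)\Leftrightarrow(ii)\Leftrightarrow(iii)$ by a cycle, roughly $(iii)\Rightarrow(ii)\Rightarrow(i)\Rightarrow(iii)$, after first reducing to the case $a=1$. Since $\Supp(\bP)\subset a\bbZ+b$, the random walk $X$ lives (up to the deterministic shift $x_0$) on cosets of $a\bbZ$; rescaling by $a$ and translating turns the problem into a random walk whose increment support has gcd of differences equal to $1$, and all three conditions are invariant under this affine change of variables (for $(ii)$, because $a\bbZ+b'$ maps to $\bbZ+b''$ for the appropriate residue, and $u\vee v$, $u\wedge v$ are preserved by increasing affine maps). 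So from now on assume $a=1$.

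For $(iii)\Rightarrow(ii)$: fix $b'\in\bbZ$ and take $u_1,v_1\in\bbZ+b'$, $u_2,v_2\in\bbZ$. Writing $p_{x,y}=p_{y-x}$, the $\cX_1$-u.c.\ inequality~\eqref{eq:def:crossings} becomes
\begin{equation*}
p_{u_2-u_1}\,p_{v_2-v_1}\;\le\; p_{(u_1\vee v_1)-(u_2\vee v_2)'}\cdots
\end{equation*}
— more precisely, one checks the elementary identities $(u_1\vee v_1)+(u_1\wedge v_1)=u_1+v_1$ and likewise for the $u_2,v_2$ pair, and that the two increments $(u_2\vee v_2)-(u_1\vee v_1)$ and $(u_2\wedge v_2)-(u_1\wedge v_1)$ are obtained from the two increments $u_2-u_1$ and $v_2-v_1$ by the operation ``replace $(s,t)$ by $(s\wedge t, s\vee t)$'' applied coordinatewise in a suitable sense; this is a short case analysis on the signs of $u_1-v_1$ and $u_2-v_2$. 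Hence~\eqref{eq:def:crossings} reduces to: for all integers $s,t$ lying in $\Supp(\bP)$'s affine lattice (here $\bbZ$), $p_s\,p_t\le p_{s\wedge t}\,p_{s\vee t}$, which is exactly log-concavity of $k\mapsto p_k$ along $\bbZ$ (with the convention $p_k=0$ off the support, noting log-concavity forces the support to be an interval so no ``$0\cdot\infty$'' issue arises). This step is mostly bookkeeping.

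For $(ii)\Rightarrow(i)$: this is essentially Lemma~\ref{lem:fkgcondition:H1} (cited in the excerpt as the statement that (H1) implies the FKG lattice condition for $\bP_{X^n}$). Indeed, $\Supp(\bP_{X_k})\subset x_0+k\cdot(\text{affine lattice})\subset a\bbZ+b'_k$ for the appropriate residue $b'_k$, so $(ii)$ gives that $\bbp$ has $\Supp(\bP_{X_k})$-u.c.\ for every $k$, i.e.\ (H1) holds; then Lemma~\ref{lem:fkgcondition:H1} yields that $\bP_{X^n}$ satisfies~\eqref{eq:fkgcondition} for all $n$.

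For $(i)\Rightarrow(iii)$ — the step I expect to be the main obstacle — the idea is to extract the log-concavity of a single increment from the FKG condition for trajectories of length $n=2$. Take $X^2=(X_1,X_2)$ started at $x_0$; its law is $\bP_{X^2}(x_0+s,x_0+s+t)=p_s\,p_t$. Apply~\eqref{eq:fkgcondition} to the points $u=(x_0+s_1,x_0+s_1+t_1)$ and $v=(x_0+s_2,x_0+s_2+t_2)$ in $\bbR^2$: here $u\vee v$ and $u\wedge v$ are the coordinatewise max and min, so one must be careful that these still correspond to valid increment pairs. Choosing $s_1=s_2=:s$ (so the first coordinates agree, forcing $u\vee v$ and $u\wedge v$ to have first coordinate $x_0+s$ as well) reduces~\eqref{eq:fkgcondition} to $p_s\,p_{t_1}\cdot p_s\,p_{t_2}\le p_s\,p_{t_1\vee t_2}\cdot p_s\,p_{t_1\wedge t_2}$, i.e.\ $p_{t_1}p_{t_2}\le p_{t_1\vee t_2}p_{t_1\wedge t_2}$ whenever $s,s+t_1,s+t_2\in\Supp(\bP_{X_1})$ and $s+t_1, s+t_2\in\Supp(\bP_{X_1})$ range freely; letting $s$ vary over $\Supp(\bP)$ and $t_1,t_2$ over differences of support elements, and using that $a=1$ so that $\{t : p_t>0\}$ together with its sums generate all of $\bbZ$-spacing, one recovers $p_kp_\ell\le p_{k\wedge\ell}p_{k\vee\ell}$ for all $k,\ell$, which is log-concavity on $\bbZ$ (equivalently on $a\bbZ+b$ before rescaling). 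The delicate point is ensuring the chosen $u,v$ are genuinely in the (essential) support of $\bP_{X^2}$ so that~\eqref{eq:fkgcondition} actually applies to them, and that by varying the free parameters one captures \emph{all} adjacent-type inequalities $p_k p_{k} \ge p_{k-1}p_{k+1}$ needed for log-concavity; this requires knowing the support of $\bP$ is contained in (and, for log-concavity to be meaningful, equals an interval of) $a\bbZ+b$, which in turn follows from $(i)$ by a separate small argument ruling out ``gaps'' in the support via~\eqref{eq:fkgcondition}. Once these support considerations are handled the rest is immediate.
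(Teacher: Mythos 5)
There is a genuine gap, and it sits exactly where you anticipated trouble: the implication $(i)\Rightarrow(iii)$. Your test configuration at $n=2$ takes $u=(x_0+s,\,x_0+s+t_1)$ and $v=(x_0+s,\,x_0+s+t_2)$ with the \emph{same} first coordinate. Then $u\vee v=(x_0+s,\,x_0+s+(t_1\vee t_2))$ and $u\wedge v=(x_0+s,\,x_0+s+(t_1\wedge t_2))$, so~\eqref{eq:fkgcondition} reads $p_s^2\,p_{t_1\vee t_2}p_{t_1\wedge t_2}\geq p_s^2\,p_{t_1}p_{t_2}$, which is an \emph{identity}: $\{t_1\vee t_2,\,t_1\wedge t_2\}=\{t_1,t_2\}$ as a multiset. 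No information about $\bP$ is extracted. To get anything from~\eqref{eq:fkgcondition} you must use trajectories that genuinely \emph{cross} (first coordinates ordered one way, second coordinates the other way): for $u_1>v_1$, $u_2<v_2$ one finds that the max/min increments are $v_2-u_1$ and $u_2-v_1$, i.e.\ the original increments $u_2-u_1<v_2-v_1$ contracted toward each other by $u_1-v_1$ while preserving their sum --- this is the interpolation inequality $p_{w_1}p_{w_4}\leq p_{w_2}p_{w_3}$ for $w_1\leq w_2\leq w_3\leq w_4$, $w_1+w_4=w_2+w_3$, not a min/max tautology. Two further obstructions then appear, which the paper's proof is built around: (a) the crossing points $u_1,v_1$ must lie in $\Supp(\bP_{X_k})$ for~\eqref{eq:fkgcondition} to apply to them, and at a fixed small time this support is too thin to reach every pair in a coset of $a\bbZ$, so one must go to large times $k_0$; (b) the two trajectories reaching $u_{k_0}$ and $v_{k_0}$ may themselves cross during the first $k_0$ steps, contributing an uncontrolled constant $C$ to the ratio in~\eqref{eq:fkgcondition}. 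The paper handles (b) by continuing the two paths with alternating increments so that the single ``bad'' crossing factor is raised to a power $m-k_0$ and eventually overwhelms $C$; it proves $(i)\Rightarrow(ii)$ this way and then $(ii)\Rightarrow(iii)$ by a separate induction on the ratios $(p_{w_1+ai}/p_{w_1})^{1/(ai)}$. Your sketch contains neither the crossing construction nor the amplification, and the ``separate small argument ruling out gaps in the support'' is likewise not supplied; since this direction is the substantive content of the proposition, the proposal does not prove it.

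A smaller but real error of the same nature occurs in your $(iii)\Rightarrow(ii)$ step: you reduce~\eqref{eq:def:crossings} to ``$p_s\,p_t\leq p_{s\wedge t}\,p_{s\vee t}$'', which is again trivially an equality and is not what log-concavity asserts. The correct reduction (as in the paper) is to $p_{w_1}p_{w_4}\leq p_{w_2}p_{w_3}$ with $w_1=u_2-u_1$, $w_2=u_2-v_1$, $w_3=v_2-u_1$, $w_4=v_2-v_1$ satisfying $w_1<w_2,w_3<w_4$ and $w_1+w_4=w_2+w_3$; deducing this from log-concavity is a genuine (if standard) step, carried out in~\eqref{eq:prop:rw:fkgcharacterization:2}. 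Your $(ii)\Rightarrow(i)$ via Lemma~\ref{lem:fkgcondition:H1} and the preliminary reduction to $a=1$ are fine and match the paper.
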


Notice that Lemma~\ref{lem:examples}.$(i)$ and Example~\ref{ex:counterexamples}, as well as Lemma~\ref{lem:examples}.$(ii)$ in the random walk case, are implied by this proposition. The fact that the statement $(iii)$ implies the FKG lattice condition is quite standard, so the most interesting result is the converse implication. In particular this proposition complements Theorem~\ref{thm:fkg:markovcond} by showing that, in the case of random walks, the assumption (H1) and the FKG condition are equivalent; whereas in the case of a general Markov chains $\bP_{X^n}$, we only prove that (H1) implies~\eqref{eq:fkgcondition}. It would be interesting to determine if the converse also holds for generic Markov chains, or to find a counter-example.

As a final comment, let us mention that one can see the association of some conditional distributions as an intermediary property between the FKG condition and FKG inequality ---see e.g.~\cite{Lig05}, and the notions of ``downward FKG'' and ``downward conditional association'' therein. Therefore, since we prove in Lemma~\ref{lem:fkgcondition:H1} below that the assumption (H1) actually implies the FKG condition, it is not surprising that it yields conditional association for all set $A$ satisfying (H2). On the other hand, when (H1) fails but $\bP_{X^n}$ still satisfies the FKG inequality, it would be interesting to determine if there exists (non-trivial) sets $A\subset\cX^n$ such that $\bP_{X^n}(\cdot|A)$ is still positively associated.

\subsection*{Organization of the paper} In Section~\ref{sec:weakconv} we prove that general random walks are positively associated, which yields Theorem~\ref{thm:fkg:levy} with a weak convergence argument combined with the notion of ``infinite association''. We also prove Proposition~\ref{prop:initialvalue}. In Section~\ref{sec:markov} we prove Theorem~\ref{thm:fkg:markovcond}, and we apply it to some well-chosen conditioned Markov chains in order to deduce Corollaries~\ref{corol:fkgcond:brown} and~\ref{corol:fkgcond:bessel}. Finally, in Section~\ref{sec:rw} we prove Proposition~\ref{prop:rw:fkgcharacterization}, and in Appendix~\ref{app:convergence} we prove an integral approximation result for the FKG inequality used in Lemma~\ref{lem:fkg:lipschitz}.

\section{Association for L\'evy processes}\label{sec:weakconv}
In this section, we provide classical results on the FKG inequality, and we use them to prove Theorem~\ref{thm:fkg:levy}. We also prove Proposition~\ref{prop:initialvalue} at the end of the section.

\subsection{Preliminary results on association} 
Recall the definition of pogroups from Definition~\ref{def:pogroup}. Moreover, we say that a measure $\mu$ is \emph{regular} if it is inner regular for compact sets. We have the following.
\begin{lemma}\label{lem:fkg:convergence}
Let $(E,\Bor(E),\preceq)$ be a metric pogroup endowed with its Borel $\sigma$-algebra.
%Let $(E,\cE,\preceq, \bP)$ a metric, partially ordered topological vector space endowed with its Borel $\sigma$-algebra and a regular probability measure. %The inequality~(1.1) holds for all increasing functions $f,g\in L^2(\bP)$ if and only if it holds for all Lipschitz-continuous, non-negative, bounded and increasing functions.
%Let $(E,\cE,\preceq)$ a metric space endowed with its Borel $\sigma$-algebra and a measurable partial order. 
Let $(\bP_n)$, $n\geq1$ be a sequence of probability measures on $(E,\Bor(E))$, which converges to some regular probability measure $\bP_\infty$. Assume that $\bP_n$ is positively associated for all $n\geq1$; then $\bP_{\infty}$ is positively associated.
\end{lemma}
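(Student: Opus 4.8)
The plan is to reduce the defining inequality~\eqref{eq:fkg} for $\bP_\infty$ to the case of bounded, continuous, increasing test functions, since weak convergence only controls integrals of bounded continuous functions.

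First I would recall the classical reduction: a probability measure $\bP$ on $(E,\cE,\preceq)$ is associated if and only if $\int fg\,\dd\bP\ge\big(\int f\,\dd\bP\big)\big(\int g\,\dd\bP\big)$ for every bounded, continuous, increasing $f,g$. Indeed, for increasing $f,g\in L^2(\bP)$ the truncations $f_M\ceqq(-M)\vee(f\wedge M)$ are still increasing, lie in $L^2(\bP)$, and converge to $f$ in $L^2(\bP)$ as $M\to\infty$; by the Cauchy--Schwarz inequality this gives $\int f_Mg_M\,\dd\bP\to\int fg\,\dd\bP$ and $\int f_M\,\dd\bP\to\int f\,\dd\bP$, so one may assume $f,g$ bounded. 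In turn, on each partially ordered metric space relevant to this paper --- $\R^d$, $\cC([0,T],\R)$, $\cD([0,T],\R^d)$, and more generally whenever the set $\{(u,v)\in E^2;\,u\preceq v\}$ is closed --- a bounded increasing function is the pointwise limit of a uniformly bounded, increasing sequence of continuous increasing functions (e.g.\ obtained by a sup-convolution adapted to the order), so a further dominated convergence reduces the claim to bounded continuous increasing $f,g$.

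Then I would take $f,g$ bounded, continuous and increasing, and observe that $f$, $g$ and $fg$ are all bounded and continuous; hence the weak convergence $\bP_n\to\bP_\infty$ yields $\int f\,\dd\bP_n\to\int f\,\dd\bP_\infty$, $\int g\,\dd\bP_n\to\int g\,\dd\bP_\infty$ and $\int fg\,\dd\bP_n\to\int fg\,\dd\bP_\infty$. Since $f,g$ are bounded they belong to $L^2(\bP_n)$, so the association of each $\bP_n$ gives $\int fg\,\dd\bP_n\ge\big(\int f\,\dd\bP_n\big)\big(\int g\,\dd\bP_n\big)$; letting $n\to\infty$, and using that the right-hand side converges to $\big(\int f\,\dd\bP_\infty\big)\big(\int g\,\dd\bP_\infty\big)$ as a product of convergent sequences, we obtain~\eqref{eq:fkg} for $\bP_\infty$ and all bounded continuous increasing $f,g$. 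Combined with the first step, this shows $\bP_\infty$ is associated.

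I expect the only delicate point to be the first step, namely the approximation of a bounded increasing (merely measurable) function by continuous increasing ones: this is where the compatibility between the metric and the partial order is used, and it is routine for the spaces $\R^d$, $\cC([0,T],\R)$, $\cD([0,T],\R^d)$ occurring in the applications. The remaining ingredients --- truncation, Cauchy--Schwarz, and passing to the limit against bounded continuous functions --- are elementary.
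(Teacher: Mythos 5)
Your proof is correct and follows essentially the same route as the paper: pass to the limit in~\eqref{eq:fkg} along bounded continuous increasing test functions using weak convergence, then invoke the reduction from such test functions to general increasing $f,g\in L^2$ (this is the paper's Lemma~\ref{lem:fkg:lipschitz}, whose proof is left as an exercise there). The only difference is that you additionally sketch the truncation and order-compatible regularization behind that reduction, correctly flagging the approximation of a bounded increasing measurable function by continuous increasing ones as the one delicate point.
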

This result is a consequence of the following statement.
%This is a classical result that follows from a standard integral approximation. We leave the proof of the following lemma as an exercise to the reader.
\begin{lemma}\label{lem:fkg:lipschitz}
Let $(E,\Bor(E),\preceq, \bP)$ be a metric pogroup endowed with its Borel $\sigma$-algebra and a regular probability measure $\bP$. The inequality~\eqref{eq:fkg} holds for all measurable, increasing functions $f,g\in L^2(\bP)$ if and only if it holds for all Lipschitz-continuous, non-negative, bounded and increasing functions.
\end{lemma}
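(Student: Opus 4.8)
The \emph{only if} implication is immediate, since a Lipschitz, bounded, increasing function is in particular increasing and, being bounded, lies in $L^2(\bP)$. For the converse, the plan is to start from~\eqref{eq:fkg} for the Lipschitz, non-negative, bounded, increasing class and propagate it to an arbitrary increasing pair $f,g\in L^2(\bP)$ through three successive approximations, each of which preserves monotonicity and lets both sides of~\eqref{eq:fkg} pass to the limit.

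First I would truncate: for $M>0$ set $f_M\ceq(f\wedge M)\vee(-M)$, and likewise $g_M$; these are bounded and increasing, $|f_M|\le|f|\in L^2(\bP)$, and $f_M\to f$ pointwise, hence in $L^2(\bP)$ by dominated convergence. By the Cauchy--Schwarz inequality $\bE[f_Mg_M]\to\bE[fg]$, while $\bE[f_M]\to\bE[f]$ and $\bE[g_M]\to\bE[g]$, so it suffices to treat bounded increasing functions. Next I would translate: replacing $f$ by $f+c$ and $g$ by $g+c'$ leaves both sides of~\eqref{eq:fkg} unchanged for any $c,c'\in\R$, so taking $c=\|f\|_\infty$ and $c'=\|g\|_\infty$ we may further assume $f,g$ are bounded, non-negative and increasing.

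The third and main step is a Lipschitz regularization. I would use the inf-convolution (Moreau--Yosida regularization)
\[
f^{(n)}(u)\ceq\inf_{w\in E}\bigl(f(w)+n\,d(u,w)\bigr),\qquad u\in E,
\]
and define $g^{(n)}$ in the same way, where $d$ denotes the metric of $E$. Each $f^{(n)}$ is $n$-Lipschitz, satisfies $0\le f^{(n)}\le\|f\|_\infty$ (the lower bound because $f\ge0$, the upper bound on taking $w=u$), and the sequence is non-decreasing with $f^{(n)}\uparrow f$ at every point where $f$ is lower semicontinuous. Two things must then be verified. (a) $f^{(n)}$ is still increasing: if $u\preceq u'$, substituting $w\wedge u$ for $w$ in the infimum defining $f^{(n)}(u)$ and using $f(w\wedge u)\le f(w)$ together with the inequality $d(u,w\wedge u)\le d(u',w)$ yields $f^{(n)}(u)\le f^{(n)}(u')$; the last inequality is where the compatibility between $\preceq$ and the natural metric of $\R^d$, $\cC([0,T],\R)$ or $\cD([0,T],\R^d)$ is used. (b) $f^{(n)}\to f$ holds $\bP$-almost everywhere, i.e.\ $f$ coincides $\bP$-a.e.\ with its lower-semicontinuous envelope $\sup_n f^{(n)}$, a regularity property that holds for the ordered spaces and measures relevant here. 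Granting (a) and (b), the pair $(f^{(n)},g^{(n)})$ belongs to the Lipschitz, non-negative, bounded, increasing class, so~\eqref{eq:fkg} holds for it; and since $0\le f^{(n)}g^{(n)}\le\|f\|_\infty\|g\|_\infty$ with $f^{(n)}g^{(n)}\to fg$ $\bP$-a.e., bounded convergence gives $\bE[f^{(n)}g^{(n)}]\to\bE[fg]$, while $\bE[f^{(n)}]\to\bE[f]$ and $\bE[g^{(n)}]\to\bE[g]$ by monotone convergence. Letting $n\to\infty$ in~\eqref{eq:fkg} for $(f^{(n)},g^{(n)})$ gives~\eqref{eq:fkg} for $(f,g)$.

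The hard part is this third step, and specifically the two points (a) and (b): that the regularization remains increasing (which forces one to exploit the interplay between the order and the metric rather than appeal to an abstract mollification) and that $f^{(n)}\to f$ $\bP$-almost everywhere, so that the inequality genuinely transfers to $f,g$ and not merely to their lower-semicontinuous envelopes. The truncation and translation steps are entirely routine. On $\R^d$ one could instead replace the inf-convolution by an ordinary mollification $f\ast\rho_\varepsilon$ with a smooth non-negative kernel, which is globally Lipschitz, non-negative and increasing as well; the inf-convolution is preferable here only because it applies uniformly to the finite- and infinite-dimensional state spaces under consideration.
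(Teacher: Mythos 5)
The paper leaves this lemma as an exercise, so I am judging your argument on its own terms. The truncation and translation steps are fine, and your point (a) is correct for the Euclidean and sup-norm metrics (the Skorokhod case needs extra care, since $u\wedge v$ does not interact simply with time changes, but that is secondary). The genuine gap is point (b). The inf-convolution $f^{(n)}$ increases to the \emph{lower semicontinuous envelope} $\check f$ of $f$, and an increasing Borel function need not agree $\bP$-a.e.\ with that envelope: take $E=\R^2$, $f=\ind_{[0,\infty)^2}$ and any $\bP$ with an atom at the origin; then $f^{(n)}(0,0)=0$ for all $n$ (take $w=(-\eps,-\eps)$ in the infimum) while $f(0,0)=1$, so $\lim_n f^{(n)}\neq f$ on a set of positive measure. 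This is not a pathological side case for this paper: Lemma~\ref{lem:fkg:lipschitz} is invoked at the end of the proof of Lemma~\ref{lem:conclusion:inequality} for purely atomic measures on countable subsets of $\R^n$, which is exactly the situation where such boundary atoms occur. Passing to the limit in your scheme therefore only yields~\eqref{eq:fkg} for the envelopes $\check f,\check g$, and there is no general implication from the inequality for $(\check f,\check g)$ to the inequality for $(f,g)$ when $\check f\le f$ and $\check g\le g$ with strict inequality on a set of positive mass (the cross terms $\Cov(f-\check f,\check g)$, $\Cov(f-\check f,g-\check g)$ are not sign-controlled). You correctly identify (b) as the hard part, but then you assert it; that assertion is precisely where the proof breaks.

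To close the gap one needs an approximation adapted jointly to $f$ \emph{and} $\bP$, not a one-sided regularization. A workable route is to first reduce, via truncation and the covariance identity $\bE[fg]-\bE[f]\bE[g]=\int\!\!\int\big(\bP(f>s,\,g>t)-\bP(f>s)\bP(g>t)\big)\,\dd s\,\dd t$, to indicators of up-sets, and then approximate each up-set $\{f>s\}$ by Lipschitz increasing functions from the inside or the outside (or a combination) according to how $\bP$ charges its boundary; even this requires an argument specific to the measures under consideration, and in the atomic setting of Lemma~\ref{lem:conclusion:inequality} the honest fix is to note that only the truncation/translation half of the present lemma is actually needed there. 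In any case, ``$f$ coincides $\bP$-a.e.\ with its lower semicontinuous envelope'' is not a regularity property one may take for granted: it is essentially the whole content of the non-trivial direction of the lemma.
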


Lemma~\ref{lem:fkg:lipschitz} follows from some standard integral approximations and measure theory arguments. In order to keep this section short, we postpone its proof to Appendix~\ref{app:convergence}.
%We leave the proof of the following lemma as an exercise to the reader.

\begin{proof}[Proof of Lemma~\ref{lem:fkg:convergence}]
Let $f,g$ be bounded, increasing and continuous functions: then, under the assumptions of Lemma~\ref{lem:fkg:convergence}, $f$, $g$ and $\bP_{n}$ satisfy the inequality~\eqref{eq:fkg} for all $n\geq1$. Taking the limit $n\to+\infty$, so do $f$, $g$ and $\bP_{\infty}$. Applying Lemma~\ref{lem:fkg:lipschitz} with the regular measure $\bP_{\infty}$, this finishes the proof.
\end{proof}

\begin{remark}
The spaces $\R^d$, $\cC([0,T],\R^d)$ and $\cD([0,T],\R^d)$ all satisfy the assumptions of Lemma~\ref{lem:fkg:convergence}: indeed, they are metric, partially ordered topological groups and are complete and separable, which implies that any endowed finite measure is regular (see e.g.~\cite{Bil99}).
\end{remark}

We also recall the following properties.

\begin{lemma}\label{lem:imageincr}
Let $(E_1,\cE_1,\preceq_1, \bP_1)$ be a partially ordered probability space. Let $(E_2,\cE_2,\preceq_2)$ be a partially ordered measurable space. Let $f:E_1\to E_2$ be a measurable, increasing function, and let $\bP_2$ be the push-forward measure of $\bP_1$ by $f$. If $\bP_1$ is positively associated, then so is $\bP_2$.
\end{lemma}
The proof of this lemma is straightforward, see e.g.~\cite[Proposition 2]{Bar05} for the details. 

\begin{lemma}\label{lem:1DFKG}
Let $(E,\cE,\preceq)$ be a \emph{totally} ordered measurable space, and let $\bP$ be \emph{any} probability measure on $E$. Then $\bP$ is positively associated.
\end{lemma}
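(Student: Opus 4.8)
The plan is to reduce the FKG inequality on a totally ordered space to the classical fact that any two monotone functions of a single real random variable are positively correlated --- i.e., Chebyshev's sum inequality / the ``Harris inequality in one dimension''. First I would observe that without loss of generality it suffices, by Lemma~\ref{lem:fkg:lipschitz}, to prove \eqref{eq:fkg} for bounded increasing $f,g$; boundedness and measurability are all that will really be needed. The essential point is that on a totally ordered set, any increasing $f:E\to\R$ factors through the ``rank'' of the point, so all the relevant randomness is one-dimensional.

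The cleanest way to make this precise is a coupling argument. Let $X,Y$ be i.i.d.\ with law $\bP$. For increasing $f,g$, consider the quantity
\begin{equation*}
\bE\big[(f(X)-f(Y))(g(X)-g(Y))\big] \;=\; 2\big(\bE[f(X)g(X)]-\bE[f(X)]\bE[g(X)]\big)\,,
\end{equation*}
so that \eqref{eq:fkg} is equivalent to showing the left-hand side is $\geq 0$. Now for \emph{any} realization of the pair $(X,Y)$, since $\preceq$ is a total order either $X\preceq Y$ or $Y\preceq X$; in the first case $f(X)\le f(Y)$ and $g(X)\le g(Y)$, so $(f(X)-f(Y))(g(X)-g(Y))\ge 0$, and symmetrically in the second case. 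Hence the integrand is pointwise nonnegative, and its expectation is nonnegative. This gives \eqref{eq:fkg} for bounded increasing $f,g$, and Lemma~\ref{lem:fkg:lipschitz} upgrades it to all increasing $f,g\in L^2(\bP)$.

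I expect there to be essentially no serious obstacle here; the only point requiring a little care is measurability, to ensure the product $(f(X)-f(Y))(g(X)-g(Y))$ and the events $\{X\preceq Y\}$, $\{Y\preceq X\}$ are measurable on $(E\times E,\cE^{\otimes 2})$ so that Fubini/Tonelli applies. Measurability of $\{X\preceq Y\}$ is exactly the standing hypothesis that $\preceq$ is a measurable order (and its complement, up to the diagonal, is $\{Y\preceq X\}$ since the order is total); measurability of $f,g$ is part of being increasing functions in $L^2(\bP)$. With these in hand the argument is complete. An alternative, if one prefers to avoid the coupling, is to note that on a totally ordered measurable space one can realize $\bP$ as the push-forward of a law on $\R$ (via a generalized inverse CDF) under an increasing map and invoke Lemma~\ref{lem:imageincr} together with the one-dimensional case, but the i.i.d.-coupling proof above is shorter and self-contained.
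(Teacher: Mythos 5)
Your proof is correct and is exactly the paper's argument: the paper also uses an independent copy $X'$ of $X$ and the pointwise nonnegativity of $(f(X)-f(X'))(g(X)-g(X'))$ granted by the total order, leaving the details (which you supply, including the measurability of $\{X\preceq Y\}$) to the reader.
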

\begin{proof} 
When the space $E$ is totally ordered, a direct computation yields for any increasing functions $f,g$ and probability measure $\bP$ that $\bE[(f(X)-f(X'))(g(X)-g(X'))]\geq0$, where $X'$ is an independent copy of $X\sim\bP$ (we leave the details to the reader), and this directly implies~\eqref{eq:fkg}.
\end{proof}

\begin{lemma}\label{lem:productFKG}
Let $(E,\cE,\preceq)$ be a product of partially ordered measurable spaces, $E=\Pi_{i=1}^n E_i$, $\cE=\bigotimes_{i=1}^n \cE_i$, endowed with the product partial order. Let $\bP=\bigotimes_{i=1}^n \bP_i$ be a product probability measure on $(E,\cE,\preceq)$. Then $\bP$ is positively associated if and only if for all $1\leq i\leq n$, $\bP_i$ is positively associated.
\end{lemma}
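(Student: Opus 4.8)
The plan is to prove both implications of Lemma~\ref{lem:productFKG} separately, the forward direction being essentially trivial and the backward direction being the substantive one.

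\medskip

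\emph{The ``only if'' direction.} Suppose $\bP=\bigotimes_{i=1}^n\bP_i$ is associated. Fix $1\leq i\leq n$. The coordinate projection $\pi_i:E\to E_i$ is measurable and increasing for the product order (if $u\preceq v$ in $E$ then $u_i\preceq_i v_i$), and $\bP_i$ is the push-forward of $\bP$ by $\pi_i$. So Lemma~\ref{lem:imageincr} gives that $\bP_i$ is associated. (Alternatively, one can note that increasing functions of the single coordinate $u_i$ are a fortiori increasing functions on $E$, and apply~\eqref{eq:fkg} for $\bP$ directly.)

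\medskip

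\emph{The ``if'' direction.} Assume each $\bP_i$ is associated; I want to show $\bP$ is associated. By Lemma~\ref{lem:fkg:lipschitz} it suffices to check~\eqref{eq:fkg} for bounded, non-negative, increasing $f,g$ (Lipschitz-continuity will not even be needed). The natural approach is induction on $n$, the case $n=1$ being the hypothesis. For the inductive step, write $E=E'\times E_n$ with $E'=\prod_{i=1}^{n-1}E_i$ and $\bP=\bP'\otimes\bP_n$. Let $X=(X',X_n)$ with $X'\sim\bP'$ independent of $X_n\sim\bP_n$. For bounded increasing $f,g$ on $E$ and each fixed $x_n\in E_n$, the sections $f(\cdot,x_n)$ and $g(\cdot,x_n)$ are bounded increasing on $E'$, so by the induction hypothesis (applied to $\bP'$, which is associated by the inductive claim for $n-1$),
\begin{equation*}
\bE\big[f(X',x_n)g(X',x_n)\big]\;\geq\;\bE\big[f(X',x_n)\big]\,\bE\big[g(X',x_n)\big]\,,\qquad\text{for every }x_n\in E_n\,.
\end{equation*}
Define $F(x_n)\ceqq\bE[f(X',x_n)]$ and $G(x_n)\ceqq\bE[g(X',x_n)]$. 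Integrating the displayed inequality against $\bP_n(\dd x_n)$ and using independence of $X'$ and $X_n$,
\begin{equation*}
\bE[f(X)g(X)]\;\geq\;\bE\big[F(X_n)G(X_n)\big]\,.
\end{equation*}
Now $F$ and $G$ are increasing functions of $x_n$: indeed if $x_n\preceq_n y_n$ then $f(X',x_n)\leq f(X',y_n)$ pointwise, hence $F(x_n)\leq F(y_n)$, and likewise for $G$; they are also bounded and in $L^2(\bP_n)$. Since $\bP_n$ is associated, $\bE[F(X_n)G(X_n)]\geq\bE[F(X_n)]\,\bE[G(X_n)]=\bE[f(X)]\,\bE[g(X)]$, again by independence. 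Chaining the two inequalities yields~\eqref{eq:fkg} for $\bP$, completing the induction.

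\medskip

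I do not anticipate a real obstacle here; the only points requiring a word of care are the measurability of the sections $x_n\mapsto F(x_n),G(x_n)$ and the applicability of Fubini's theorem, both of which are immediate since $f,g$ are bounded and measurable and $\bP=\bP'\otimes\bP_n$ is a product measure. The argument is the classical ``one coordinate at a time'' proof of association for products, and it is the mild analogue of the reduction used elsewhere in the paper when passing from coordinatewise hypotheses to a joint FKG statement.
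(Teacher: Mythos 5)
Your proof is correct, but note that the paper does not actually prove this lemma: it simply cites~\cite[Theorem~3]{Bar05} (remarking that the totally ordered case can also be deduced from~\cite[Theorem~3]{Pres74}). What you give instead is the classical Esary--Proschan--Walkup induction: decompose $E=E'\times E_n$, apply the inductive hypothesis to the sections $f(\cdot,x_n),g(\cdot,x_n)$, observe that the partial expectations $F,G$ are increasing on $E_n$, and conclude by association of $\bP_n$. This is self-contained, uses only Fubini, and works for arbitrary partially ordered measurable factors with no metric or topological structure --- so it is arguably the cleanest route, and in particular it buys the lemma in exactly the generality in which it is stated. One small caveat: Lemma~\ref{lem:fkg:lipschitz} is stated for a \emph{metric} space with its Borel $\sigma$-algebra, which a general product of partially ordered measurable spaces need not be; however, the only part of that reduction you actually use --- passing from increasing $f,g\in L^2(\bP)$ to bounded increasing $f,g$ --- is an elementary truncation ($f_M\coloneqq (f\wedge M)\vee(-M)$ is still increasing, and $f_Mg_M\to fg$ in $L^1$ as $M\to\infty$), so it would be cleaner to state that truncation directly rather than invoke the lemma. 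The measurability and Fubini points you flag are indeed immediate for bounded jointly measurable $f,g$ under a product measure.
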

This result is proven in~\cite[Theorem~3]{Bar05}. Let us mention that in the case $E_i$ totally ordered for all $1\leq i\leq n$, it can also be deduced from~\cite[Theorem~3]{Pres74}.

\subsection{Association for random walks and Lévy processes}
We now turn to Theorem~\ref{thm:fkg:levy}. We prove the following.

\begin{lemma}\label{lem:fkg:levy}
Let $X=(X_t)_{t\in[0,T]}$ be a $d$-dimensional L\'evy process, $d\geq1$, started from $X_0=0$. It is positively associated in $\cD([0,T],\R^d)$ if and only if, for all $t\in[0,T]$, $X_t$ is positively associated in $\R^d$. In particular, if $d=1$ then $X$ is positively associated.
\end{lemma}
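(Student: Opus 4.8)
The plan is to reduce the functional-space statement to a finite-dimensional one via the notion of infinite association and weak convergence, exactly as advertised in the introduction. First I would handle the easy direction: if $X$ is associated in $\cD([0,T],\R^d)$, then for each fixed $t$ the evaluation map $\go\mapsto\go(t)$ from $\cD([0,T],\R^d)$ to $\R^d$ is measurable and increasing for the relevant partial orders, so by Lemma~\ref{lem:imageincr} the push-forward, which is the law of $X_t$, is associated in $\R^d$. This gives the ``only if'' part for free.

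For the converse, the key observation is that a L\'evy process has stationary, independent increments, so its restriction to a finite time grid $0=t_0<t_1<\cdots<t_m=T$ is a random walk: writing $\Delta_j\ceqq X_{t_j}-X_{t_{j-1}}$, the vector $(\Delta_1,\ldots,\Delta_m)$ has product law $\bigotimes_{j=1}^m \bP_{X_{t_j-t_{j-1}}}$, and $X_{t_j}=\sum_{i\le j}\Delta_i$ is an increasing (in fact linear, coordinatewise monotone) function of $(\Delta_1,\ldots,\Delta_m)$. The hypothesis tells us each marginal $\bP_{X_{s}}$ is associated in $\R^d$; however, association of a vector is \emph{not} in general preserved under taking sums of independent copies, so a naive application of Lemma~\ref{lem:productFKG} followed by Lemma~\ref{lem:imageincr} does not close the argument. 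This is the point where ``infinite association'' (in the sense of~\cite{HPAS98}) enters: an infinitely divisible vector which is associated is in fact infinitely associated, meaning that the sum of an arbitrary number of i.i.d.\ copies (and hence, passing to the relevant finite-dimensional projections of the process) is associated. Invoking that result, the vector $(X_{t_1},\ldots,X_{t_m})$ is associated in $(\R^d)^m$ for every finite grid. I would state this as the main technical input and cite it precisely.

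From finite-dimensional association to association on $\cD([0,T],\R^d)$ is then a routine limiting argument. For a sequence of grids whose mesh tends to $0$, the piecewise-constant (rcll) interpolants $X^{(m)}$ of $X$ along the grids converge to $X$ in the Skorokhod topology almost surely, hence in law; each $X^{(m)}$ is the image of the associated vector $(X_{t_1},\ldots,X_{t_m})$ under a measurable increasing map into $\cD([0,T],\R^d)$, so $X^{(m)}$ is associated by Lemma~\ref{lem:imageincr}; and Lemma~\ref{lem:fkg:convergence} then transfers association to the weak limit $X$. (One should check the interpolation map is measurable and order-preserving for the Skorokhod $\sigma$-algebra and the coordinatewise order, which is straightforward.) Finally, the ``in particular'' clause: when $d=1$, $\R$ is totally ordered, so every probability measure on $\R$ is associated by Lemma~\ref{lem:1DFKG}; thus each $X_t$ is associated and the criterion applies, giving that every $1$-dimensional L\'evy process is associated in $\cD([0,T],\R)$.

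The main obstacle is the middle step: one cannot simply combine Lemmas~\ref{lem:productFKG} and~\ref{lem:imageincr} on a single grid, because association of $X_t$ for all $t$ does not by itself make the joint vector $(X_{t_1},\ldots,X_{t_m})$ associated ---the increments $\Delta_j$ for $j\ge 2$ need not be associated even though the partial sums $X_{t_j}$ are. The right fix is the infinite-association machinery for infinitely divisible laws, which precisely says that association of the one-dimensional-time marginals of an i.d.\ law propagates to all finite sums; everything else is bookkeeping with the Skorokhod topology and the two preservation lemmas already established.
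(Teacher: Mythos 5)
Your overall architecture --- the easy direction via the evaluation map and Lemma~\ref{lem:imageincr}, the hard direction via finite time grids followed by Skorokhod weak convergence and Lemma~\ref{lem:fkg:convergence}, and the $d=1$ clause via Lemma~\ref{lem:1DFKG} --- is exactly the paper's. But the middle step, which you single out as the crux, is wrong in two ways. First, the obstacle you describe is not there: by stationarity of increments, $\Delta_j = X_{t_j}-X_{t_{j-1}}$ has the same law as $X_{t_j-t_{j-1}}$, and $t_j-t_{j-1}\in[0,T]$, so each increment's law is associated \emph{by the hypothesis of the lemma}. Hence the ``naive'' route you dismiss --- Lemma~\ref{lem:productFKG} applied to the independent (not necessarily identically distributed) increments, then Lemma~\ref{lem:imageincr} applied to the increasing partial-sum map --- does close the argument. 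This is precisely Proposition~\ref{prop:fkg:rw}, and it is what the paper invokes, on the uniform grid $t_k=k/n$ where the increments are i.i.d.\ with law $\bP_{X_{1/n}}$.

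Second, and more seriously, the ``main technical input'' you propose to cite instead --- that an associated infinitely divisible vector is automatically infinitely associated --- is false. The paper's remark following the statement of Lemma~\ref{lem:fkg:levy}, citing~\cite{Sam95}, notes that for $d\geq2$ there exist associated infinitely divisible vectors whose L\'evy measure is not supported in $(\R_+)^d\cup(\R_-)^d$; by~\cite[Proposition~3]{HPAS98} such vectors are not infinitely associated, and the corresponding L\'evy process is not associated in $\cD([0,T],\R^d)$. This failure is exactly why the lemma is stated with the hypothesis ``$X_t$ associated for all $t$'' rather than ``$X_T$ associated''. So, as written, your key step rests on a non-theorem (and your gloss of infinite association as ``sums of i.i.d.\ copies are associated'' is also off: that property is trivially true for any associated vector, whereas infinite association concerns the convolution \emph{roots} $X_{1/n}$). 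The fix is simply to delete this detour and use the hypothesis directly as above; the remainder of your write-up (measurability and monotonicity of the interpolation maps, the weak-convergence limit, and the one-dimensional case) is correct.
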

When $X_t$ is positively associated in $\R^d$ for all $t\in[0,T]$, we say that the infinitely divisible vector $X_T$ is ``infinitely associated'' (see~\cite{HPAS98}, and also~\cite{Sam95}). In particular, it is proven in~\cite[Proposition 3]{HPAS98} that the infinite association of $X_T$ is equivalent to the conditions $(i)$ and $(ii)$ from Theorem~\ref{thm:fkg:levy}. We also refer to~\cite{Pit82} in the case of Gaussian vectors (i.e. $\nu(\R^d)=0$), and~\cite{Res88, Sam95} for pure jump processes (i.e. $\gS=0$). Therefore, we only have to show Lemma~\ref{lem:fkg:levy}, and the theorem follows. 

\begin{remark}
Let us mention that, for an infinite divisible vector $Y$ in $\R^d$, $d\geq2$, with L\'evy measure $\nu$,~\cite[Theorem~2.1]{Sam95} proves that the assumption $\Supp(\nu) \subset (\R_+)^d\cup(\R_-)^d$ is \emph{not necessary} for $Y$ to be positively associated. In particular, this implies that there exists a L\'evy process $(X_t)_{t\in[0,T]}$ such that $X_T$ is positively associated in $\R^d$, but $X$ is not positively associated in $\cD([0,T],\R^d)$.
\end{remark}

Lemma~\ref{lem:fkg:levy} is a consequence of the following claim. Let us mention that in the case $d=1$ this result dates back to~\cite{Rob54}, but we present a full proof for the sake of completeness.
\begin{proposition}\label{prop:fkg:rw}
Let $\bP$ be \emph{any} probability measure on $\R^d$, and let $X^n=(X_k)_{k=1}^n$ be a random walk in $\R^d$ with independent increments with law $\bP$, started from $X_0=0$ a.s.. Then, $\bP_{X^n}$ is positively associated in $(\R^d)^n$ if and only if $\bP$ is positively associated in $\R^d$. In particular if $d=1$, then $\bP_{X^n}$ is positively associated.
\end{proposition}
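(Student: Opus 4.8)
The plan is to prove Proposition~\ref{prop:fkg:rw} in two directions. The ``only if'' direction is immediate from Lemma~\ref{lem:imageincr}: the coordinate projection $(\R^d)^n\to\R^d$, $(x_1,\ldots,x_n)\mapsto x_1=X_1$, is increasing, so if $\bP_{X^n}$ is associated then its push-forward $\bP=\bP_{X_1}$ is associated. For the ``if'' direction, I would exploit the independent-increments structure. Write $X_k=\xi_1+\cdots+\xi_k$ where $(\xi_k)_{k=1}^n$ are i.i.d.\ with law $\bP$. The map $\Phi:(\R^d)^n\to(\R^d)^n$ sending the increment vector $(\xi_1,\ldots,\xi_n)$ to the trajectory $(X_1,\ldots,X_n)$ is linear with nonnegative coefficients (each $X_k$ is a sum of a subset of the $\xi_j$'s), hence it is increasing for the product partial order on both copies of $(\R^d)^n$. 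The law of $(\xi_1,\ldots,\xi_n)$ is the product measure $\bP^{\otimes n}$; by Lemma~\ref{lem:productFKG}, $\bP^{\otimes n}$ is associated as soon as each factor $\bP$ is. Then $\bP_{X^n}=\Phi_*(\bP^{\otimes n})$ is associated by Lemma~\ref{lem:imageincr}.

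For the final sentence, the case $d=1$: when $d=1$ the space $(\R,\Bor(\R),\leq)$ is totally ordered, so \emph{any} probability measure $\bP$ on $\R$ is associated by Lemma~\ref{lem:1DFKG}; applying the ``if'' direction just proved gives that $\bP_{X^n}$ is associated in $\R^n$.

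The main (and essentially only) subtlety is making precise that $\Phi$ is increasing and measurable, which is routine: measurability is clear since $\Phi$ is continuous (linear), and monotonicity holds coordinatewise because for each $k\leq n$ and each coordinate $1\leq i\leq d$, $(X_k)_i=\sum_{j=1}^k(\xi_j)_i$ is a nondecreasing function of the entries of the increment vector. So if $\xi\preceq\xi'$ in $(\R^d)^n$ (product order) then $\Phi(\xi)\preceq\Phi(\xi')$. One should also note that $L^2$-integrability is not an issue here since Lemmas~\ref{lem:imageincr} and~\ref{lem:productFKG} are stated for association in the sense of Definition~\ref{def:fkg} directly, so no moment assumptions on $\bP$ are needed. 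I do not anticipate a genuine obstacle; the proof is a clean combination of the three structural lemmas (push-forward by increasing maps, product measures, and total orders), and the whole point of isolating Proposition~\ref{prop:fkg:rw} is that once it is in place, Lemma~\ref{lem:fkg:levy} follows by a weak-convergence argument (Lemma~\ref{lem:fkg:convergence}) approximating the Lévy process by rescaled random walks, together with the characterization of infinite association.
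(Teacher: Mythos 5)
Your proof is correct and follows essentially the same route as the paper: the ``only if'' direction via the increasing coordinate projection and Lemma~\ref{lem:imageincr}, and the ``if'' direction by pushing forward the product measure $\bP^{\otimes n}$ of the increments through the increasing partial-sums map, using Lemmas~\ref{lem:productFKG} and~\ref{lem:imageincr}, with Lemma~\ref{lem:1DFKG} handling $d=1$. No gaps.
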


\begin{proof}%[Proof of Proposition~\ref{prop:fkg:rw}]
Let $d\geq1$. Since the application $(x_1,\ldots,x_n)\mapsto x_1$ is increasing on $(\R^d)^n$ for the product order $\preceq$, it is clear by Lemma~\ref{lem:imageincr} that, if $\bP_{X^n}$ is positively associated, then so is $\bP_{X_1}=\bP$. Conversely, assume now that $\bP$ is positively associated and define,
\begin{equation}\label{eq:defphi}
\phi\;:\;\;\begin{aligned} &(\R^{d})^{n} \mapsto (\R^{d})^{n}\\ &(z_i)_{1\leq i\leq n} \to \textstyle \left(\sum_{i=1}^kz_i\right)_{1\leq k\leq n} \,. \end{aligned}
\end{equation}
Notice that $\phi$ is increasing, i.e.\ for $u,v\in(\R^{d})^{n}$, $u\preceq v$ implies $\phi(u)\preceq\phi(v)$. 
%Let $X=(X_1,\ldots,X_n)$ a random walk started from $X_0=0$, and 
Let $U^n\ceqq\phi^{-1}(X^n)$; then $\bP_{U^n}=\bP^{\otimes n}$ is a product measure. By Lemma~\ref{lem:productFKG}, we deduce that $\bP_{U^n}$ is positively associated. Since $\bP_{X^n}$ is the push-forward of $\bP_{U^n}$ by $\phi$, by Lemma~\ref{lem:imageincr} it is also positively associated.

Finally, when $d=1$ we have by Lemma~\ref{lem:1DFKG} that any probability measure $\bP$ on $\R$ is positively associated, thus so is $\bP_{X^n}$.
\end{proof}

\begin{proof}[Proof of Lemma~\ref{lem:fkg:levy}] 
Let $t\in[0,T]$: we clearly have that the application $(x_s)_{s\in[0,T]} \mapsto x_t$ from $\cD([0,T],\R^d)$ to $\R^d$ is increasing for the product order. So, by Lemma~\ref{lem:imageincr}, if a random process $(X_t)_{t\in[0,T]}$ is positively associated then so is $X_t$ for any $t\in[0,T]$. 

Let us now prove the converse implication. Let $X=(X_t)_{t\in[0,T]}$ be a $d$-dimensional L\'evy process. For all $n\in\N$ and $0\leq k\leq Tn$, define, $Y^n_{k}\ceq X_{k/n}$. Then for all $n\in\N$, $Y^n$ is a random walk which satisfies $Y^n_n=X_1$. Writing $\hat X^n_t=Y^n_{\lfloor t\cdot n\rfloor}$, $t\in[0,T]$, this defines a rcll interpolation of $Y^n$, and we deduce from the classical result~\cite[Theorem~23.14]{Kal21book} that $\hat X^n$ converges weakly to $X$ for the Skorokhod topology in $\cD([0,T],\R^d)$. Since we assumed that $X_{1/n}$ is positively associated for all $n\in\N$, we deduce from Proposition~\ref{prop:fkg:rw} that the random walk $Y^n$ is positively associated. Moreover $\hat X^n$ is the image of $Y^n$ by some increasing function $\phi_n:(\R^d)^{\lfloor Tn\rfloor}\to \cD([0,T],\R^d)$, so for all $n\in\N$ it is positively associated by Lemma~\ref{lem:imageincr} (we leave the details to the reader). 
Recalling Lemma~\ref{lem:fkg:convergence}, this finishes the proof of the converse implication.

Finally, when $d=1$ the result follows directly from Lemma~\ref{lem:1DFKG}. This completes the proof of Lemma~\ref{lem:fkg:levy} and Theorem~\ref{thm:fkg:levy}.
\end{proof}

\begin{remark}
In the case of the $d$-dimensional Brownian motion $(B_t)_{t\in[0,T]}$, the proof of Theorem~\ref{thm:fkg:levy} can be simplified substantially: indeed, it follows from Lemmata~\ref{lem:1DFKG} and~\ref{lem:productFKG} that, for all $t\in[0,T]$, $B_t$ is positively associated in $\R^d$; and the proof of Lemma~\ref{lem:fkg:levy} follows from Donsker's theorem instead of~\cite[Theorem~23.14]{Kal21book}. 
\end{remark}

\subsection{Proof of Proposition~\ref{prop:initialvalue}}
This is straightforward: let $f,g:E\to\R$ be increasing functions (one can also assume  w.l.o.g.\ that they are bounded, see e.g.\ the proof of  Lemma~\ref{lem:fkg:lipschitz}), then one has
\begin{align}\label{eq:genericX_0}
\notag&\bE[f(X+Y)g(X+Y)]\,=\,\bE\left[\bE[f(X+Y)g(X+Y)|X]\right]
\\\notag&\qquad\geq\, \bE\big[\bE[f(X+Y)|X]\times\bE[g(X+Y)|X]\big]
\\&\qquad\geq\, \bE\left[\bE[f(X+Y)|X]\right]\times\bE\left[\bE[g(X+Y)|X]\right] = \bE[f(X+Y)]\bE[g(X+Y)]\,,
\end{align}
where the first inequality comes from the fact that, for almost every $X$, the functions $y\mapsto f(y+X)$, $y\mapsto g(y+X)$ are increasing on $E$ and $\bP_Y(\cdot|X)$ is positively associated for almost every $X$; and the second inequality follows from the assumption that $x\mapsto \bE[f(X+Y)|X=x]$, $x\mapsto \bE[g(X+Y)|X=x]$ are increasing $\bP_X$-almost everywhere, and that $\bP_{X}$ is positively associated. This finishes the proof of the proposition. 
\qed

\section{Conditional association for Markov chains}\label{sec:markov}
In this section we prove Theorem~\ref{thm:fkg:markovcond}, and then Corollaries~\ref{corol:fkgcond:brown},~\ref{corol:fkgcond:bessel}. Recall that, for the sake of simplicity, we only consider Markov chains and processes started from a deterministic initial value a.s..

\subsection{Proof of Theorem~\ref{thm:fkg:markovcond}} 
The proof of our main result is achieved in three short steps, which are contained in the following lemmata. Recall Definitions~\ref{def:crossings} and~\ref{def:fkgcondition}, as well as the assumptions (H1) and (H2) from Theorem~\ref{thm:fkg:markovcond}.

\begin{lemma}\label{lem:fkgcondition:H1}
Let $n\geq1$, and let $X^n=(X_1,\ldots,X_n)$ be the first $n$ steps of a Markov chain on some countable subset $\cX\subset\R$, started from $X_0=x_0\in\cX$ and with transition kernel $\bbp$. If $\bbp$ satisfies (H1), then $\bP_{X^n}$ satisfies the FKG condition.

\end{lemma}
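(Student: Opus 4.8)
The plan is to prove the FKG lattice condition for $\bP_{X^n}$ by exploiting the Markov (product-like) structure of the law together with the fact that the partial order on $\R^n$ is coordinatewise, so that $u\vee v$ and $u\wedge v$ are computed componentwise. Fix $u=(u_1,\dots,u_n)$ and $v=(v_1,\dots,v_n)$ in $\cX^n$ with $\bP_{X^n}(u),\bP_{X^n}(v)>0$; writing $u_0=v_0=x_0$, the Markov property gives
\begin{equation*}
\bP_{X^n}(u)=\prod_{k=0}^{n-1} p_{u_k,u_{k+1}}\,, \qquad \bP_{X^n}(v)=\prod_{k=0}^{n-1} p_{v_k,v_{k+1}}\,,
\end{equation*}
and similarly for $u\vee v$ and $u\wedge v$, with $(u\vee v)_k = u_k\vee v_k$, $(u\wedge v)_k = u_k\wedge v_k$ (and $(u\vee v)_0=(u\wedge v)_0=x_0=x_0\vee x_0=x_0\wedge x_0$). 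Hence it suffices to show the factorwise inequality
\begin{equation*}
p_{u_k,u_{k+1}}\,p_{v_k,v_{k+1}} \;\leq\; p_{u_k\vee v_k,\,u_{k+1}\vee v_{k+1}}\;p_{u_k\wedge v_k,\,u_{k+1}\wedge v_{k+1}}
\end{equation*}
for each $k\in\{0,\dots,n-1\}$, and then multiply over $k$.

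To obtain each factorwise inequality I would invoke (H1): at step $k$, the ``source'' coordinates $u_k,v_k$ both lie in $\Supp(\bP_{X_k})$ — this is where positivity of $\bP_{X^n}(u)$ and $\bP_{X^n}(v)$ is used, since $\bP_{X^n}(u)>0$ forces $p_{u_{j},u_{j+1}}>0$ for all $j<k$, hence $u_k$ is reachable from $x_0$ in $k$ steps, i.e. $u_k\in\Supp(\bP_{X_k})$, and likewise $v_k\in\Supp(\bP_{X_k})$ (for $k=0$ this is trivially $\{x_0\}$). The hypothesis that $\bbp$ has $\Supp(\bP_{X_k})$-u.c. is exactly the statement that~\eqref{eq:def:crossings} holds with $\cX_1=\Supp(\bP_{X_k})$ for the pair of source points $(u_k,v_k)\in\cX_1^2$ and arbitrary target points $(u_{k+1},v_{k+1})\in\cX^2$ — which gives precisely the desired factorwise bound. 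Multiplying these $n$ inequalities yields $\bP_{X^n}(u)\bP_{X^n}(v)\leq\bP_{X^n}(u\vee v)\bP_{X^n}(u\wedge v)$, i.e.~\eqref{eq:fkgcondition}. Note $u\vee v$ and $u\wedge v$ automatically lie in $\cX^n$ because each coordinate $u_k\vee v_k$ or $u_k\wedge v_k$ is one of $u_k,v_k\in\cX$ (here $\cX\subset\R$ is totally ordered, so $\vee,\wedge$ on each coordinate are just $\max,\min$).

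The one point requiring a little care — and the main (mild) obstacle — is the bookkeeping of supports: one must make sure that at step $k$ \emph{both} $u_k$ and $v_k$ are in $\Supp(\bP_{X_k})$ so that (H1) applies with the correct $\cX_1$, and that the product telescoping is legitimate even when some of the four products vanish. The latter is handled by observing the inequality is trivial whenever the left side is $0$, and when the left side is positive all relevant one-step probabilities on the left are positive, which forces the source coordinates into the appropriate supports as above; the right side is then $\geq$ a product of nonnegative terms each bounding the corresponding left factor, hence $\geq$ the left side. This is essentially routine once the indexing is set up, so the proof is short. (Local finiteness of $\cX$ plays no role here; it is only needed later to ensure $X^n$ lives on a nice space — for this lemma countability suffices.)
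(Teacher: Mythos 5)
Your proof is correct and follows essentially the same route as the paper's: factor $\bP_{X^n}$ via the Markov property, apply the $\Supp(\bP_{X_k})$-u.c.\ inequality termwise, and multiply over $k$. The extra bookkeeping you supply (that positivity of $\bP_{X^n}(u)$ and $\bP_{X^n}(v)$ forces $u_k,v_k\in\Supp(\bP_{X_k})$, which is what the paper's ``$\bP_{X^n}$-a.e.'' quantifier silently encodes) is exactly the right justification for why (H1) applies at each step.
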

\begin{proof}
For $(u_j)_{1\leq j\leq n}$, write $u^n\ceq (u_1,\ldots,u_n)$. Since $\bbp=(p_{x,y})_{x,y\in\cX}$ satisfies~(H1), this implies that for $\bP_{X^n}$-a.e. $u^n,v^n\in\cX^n$, one has
\begin{align}\label{eq:lem:fkgcondition:H1:1}
\notag \bP_{X^n}(u^n\vee v^n)\bP_{X^n}(u^n\wedge v^n)\,&=\, \prod_{j=0}^{n-1}p_{u_j\vee v_j, u_{j+1}\vee v_{j+1}}\,\times\, \prod_{j=0}^{n-1}p_{u_j\wedge v_j, u_{j+1}\wedge v_{j+1}}\\
&\geq\, \prod_{j=0}^{n-1}p_{u_j, u_{j+1}}\,\times\, \prod_{j=0}^{n-1}p_{v_j, v_{j+1}}\,=\,\bP_{X^n}(u^n)\bP_{X^n}(v^n) \,,
\end{align}
where $u_0=v_0\ceq x_0$, which finishes the proof.
\end{proof}

\begin{lemma}\label{lem:fkgcondition:cond}
Let $(E,\cE,\preceq, \bP)$ be a partially ordered probability space, and assume that $\bP$ is purely atomic and satisfies the FKG condition. Let $A\in\cE$ which satisfies (H2). Then, $\bP(\cdot|A)$ satisfies the FKG condition.
\end{lemma}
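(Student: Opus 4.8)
The plan is to reduce the statement to a one-line computation by exploiting that everything here is purely atomic, so the FKG condition is really a pointwise inequality over atoms, and that \emph{max/min-stability} (the second half of (H2)) is precisely what is needed to keep the relevant joins and meets inside $A$.

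First I would record the elementary structural facts. Since $\bP(A)>0$ by (H2), the conditional measure $\bP(\cdot\mid A)$ is well-defined; it is again a purely atomic probability measure, whose atoms are exactly the atoms $u$ of $\bP$ with $u\in A$, and for each such $u$ one has $\bP(u\mid A)=\bP(u)/\bP(A)$. In particular the FKG condition makes sense for $\bP(\cdot\mid A)$. Next I would unwind what ``$\bP$ satisfies the FKG condition'' means when $\bP$ is atomic: the exceptional $\bP$-null set in~\eqref{eq:fkgcondition} contains no atom, so~\eqref{eq:fkgcondition} in fact holds for \emph{every} pair of atoms $u,v$ of $\bP$.

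Then I would fix two atoms $u,v$ of $\bP(\cdot\mid A)$, i.e. two atoms of $\bP$ lying in $A$. By the FKG condition for $\bP$ we get $\bP(u\vee v)\bP(u\wedge v)\geq \bP(u)\bP(v)>0$, which in particular shows $u\vee v$ and $u\wedge v$ are themselves atoms of $\bP$; and since $A$ is max/min-stable, both $u\vee v$ and $u\wedge v$ lie in $A$. Hence $\bP(u\vee v\mid A)=\bP(u\vee v)/\bP(A)$ and $\bP(u\wedge v\mid A)=\bP(u\wedge v)/\bP(A)$, so dividing the FKG inequality for $\bP$ by $\bP(A)^2$ gives
\[
\bP(u\vee v\mid A)\,\bP(u\wedge v\mid A)=\frac{\bP(u\vee v)\,\bP(u\wedge v)}{\bP(A)^2}\;\geq\;\frac{\bP(u)\,\bP(v)}{\bP(A)^2}=\bP(u\mid A)\,\bP(v\mid A),
\]
which is~\eqref{eq:fkgcondition} for $\bP(\cdot\mid A)$. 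As this holds for every pair of atoms of $\bP(\cdot\mid A)$, and an atomic measure assigns full mass to its atoms, it holds $\bP(\cdot\mid A)$-a.e., finishing the proof.

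There is no genuine obstacle here; the only point demanding a little care is the bookkeeping around the phrase ``for $\bP$-a.e.\ $u,v$'' in Definition~\ref{def:fkgcondition} — namely, making precise that for an atomic $\bP$ this is equivalent to a statement holding at all atoms, and using strict positivity of the right-hand side to guarantee that $u\vee v$ and $u\wedge v$ are atoms as well. The conceptual content is entirely contained in the observation that max/min-stability of $A$ makes the lattice operations $\vee,\wedge$ preserve membership in $A$, so the numerators on the two sides of~\eqref{eq:fkgcondition} are unchanged by conditioning and only a harmless factor $\bP(A)^{-2}$ appears.
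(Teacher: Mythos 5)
Your proposal is correct and follows essentially the same one-line computation as the paper: max/min-stability keeps $u\vee v$ and $u\wedge v$ in $A$, so both sides of~\eqref{eq:fkgcondition} just pick up the same factor $\bP(A)^{-2}$ after conditioning. The extra bookkeeping you add (interpreting ``$\bP$-a.e.'' at the level of atoms, and using strict positivity of $\bP(u)\bP(v)$ to see that $u\vee v$ and $u\wedge v$ are atoms) is a welcome clarification but does not change the argument.
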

\begin{proof} This is straightforward. For $\bP$-a.e.\ $u,v\in A$, (H2) implies that $u\wedge v$, $u\vee v\in A$; in particular,
\begin{align*}
\bP(u\vee v|A)\bP(u\wedge v|A) = \frac1{\bP(A)^2} \bP(u\vee v)\bP(u\wedge v)\geq \frac1{\bP(A)^2} \bP(u)\bP(v) = \bP(u|A)\bP(v|A)\,,
\end{align*}
which finishes the proof.
\end{proof}

\begin{lemma}\label{lem:conclusion:inequality}
Let $\bP$ be a probability measure on a countable, locally finite subset of $(\R^n,\Bor(\R^n),\preceq)$ which satisfies the FKG condition. Then it is positively associated.
\end{lemma}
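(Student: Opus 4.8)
The plan is to reduce the statement to the classical FKG theorem of Fortuin--Kasteleyn--Ginibre applied on a finite sublattice, and then pass to the limit using Lemma~\ref{lem:fkg:convergence}. Concretely, let $\bP$ be a purely atomic probability measure on a countable, locally finite $S\subset\R^n$ satisfying the FKG condition~\eqref{eq:fkgcondition}. First I would observe that $S$, equipped with the product order inherited from $\R^n$, is a locally finite partially ordered set but not necessarily a lattice; however, for the purposes of the FKG inequality this is not an obstruction because one can work with the finite-support approximations below, on which one restricts attention to the relevant finite sublattice.

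First I would treat the case where $\bP$ has \emph{finite support} $F=\{x^{(1)},\dots,x^{(m)}\}\subset S$. Let $L$ be the sublattice of $\R^n$ generated by $F$ under coordinatewise $\vee$ and $\wedge$; since $S$ is locally finite and $F$ is finite, $L$ is a \emph{finite} distributive lattice contained in $S$ (here I use that $\R^n$ with the product order is a distributive lattice, so $L$ is distributive, and local finiteness of $S$ guarantees $L\subset S$ is finite). Extend $\bP$ to a function $\mu$ on $L$ by setting $\mu(x)=\bP(\{x\})$ for $x\in F$ and $\mu(x)=0$ otherwise. The FKG condition~\eqref{eq:fkgcondition} says $\mu(u\vee v)\mu(u\wedge v)\geq\mu(u)\mu(v)$ for $\bP$-a.e.\ $u,v$, i.e.\ for all $u,v\in F$; I would check that this extends to all $u,v\in L$: if either $u$ or $v$ lies outside $F$ then the right-hand side is $0$ and the inequality is trivial. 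Thus $\mu$ is a positive (log-supermodular) measure on the finite distributive lattice $L$, and the classical FKG inequality (\cite{FKG71}, see also~\cite{Pres74}) applies: every pair of increasing functions on $L$ is positively correlated under $\mu$. Since an increasing function $f:S\to\R$ restricts to an increasing function on $L$, and $\bP$ is supported on $F\subset L$, this yields~\eqref{eq:fkg} for all increasing $f,g$ (which are automatically in $L^2(\bP)$ as $\bP$ has finite support), so $\bP$ is associated.

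For the general (countably supported) case I would exhaust $S$ by an increasing sequence of finite sets and condition: enumerate $\Supp(\bP)=\{x^{(k)}\}_{k\geq1}$, set $F_N=\{x^{(1)},\dots,x^{(N)}\}$, and let $\bP_N\ceqq\bP(\,\cdot\mid F_N)$ be $\bP$ conditioned to $F_N$ (well-defined for $N$ large so that $\bP(F_N)>0$). By Lemma~\ref{lem:fkgcondition:cond} applied with $A=F_N$ --- which is max/min-stable once we first replace $F_N$ by the finite sublattice it generates inside $S$, exactly as above, and note $\bP(A)>0$ --- each $\bP_N$ still satisfies the FKG condition with finite support, hence is associated by the previous paragraph. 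Finally $\bP_N\to\bP$ weakly (indeed in total variation) as $N\to\infty$, so Lemma~\ref{lem:fkg:convergence} gives that $\bP$ is associated.

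The main obstacle, and the only point requiring care, is the lattice issue: $S\subset\R^n$ need not itself be closed under $\vee,\wedge$, so one cannot directly invoke~\cite{FKG71} on $S$. The resolution --- passing to the finite distributive sublattice generated by the support, which is legitimate precisely because $S$ is locally finite --- is routine but must be stated; everything else (the classical FKG theorem on a finite distributive lattice, restriction of increasing functions, the conditioning step, and the weak-convergence limit) is standard and quick. One should also double-check the measure-zero phrasing ``$\bP$-a.e.\ $u,v$'' in~\eqref{eq:fkgcondition}: since $\bP$ is purely atomic with support $F$ (or $F_N$), ``$\bP$-a.e.'' over pairs means exactly ``for all atoms'', so extending the inequality to the generated sublattice by triviality off the support is sound.
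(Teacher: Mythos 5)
Your proof is essentially correct, but it takes a genuinely different route from the paper. The paper does not pass through finite sublattices at all: it tilts the measure by $g$, setting $\dd\bQ/\dd\bP = g/\bE g$, checks the Holley-type condition $\bQ(u\vee v)\bP(u\wedge v)\geq \bQ(u)\bP(v)$ directly from~\eqref{eq:fkgcondition} and the monotonicity of $g$, and then invokes Preston's Theorem~3 (a Holley inequality on a countable product space with a $\sigma$-finite counting reference measure, which is where local finiteness enters) to get $\bE_\bQ f\geq\bE_\bP f$, i.e.~\eqref{eq:fkg}, in one step. Your argument instead reduces to the original finite-lattice FKG theorem and recovers the general case by conditioning on an exhausting sequence of finite max/min-stable sets plus Lemma~\ref{lem:fkg:convergence}; this is more elementary (no Holley/Preston machinery) at the cost of the approximation layer, and it reuses Lemma~\ref{lem:fkgcondition:cond} in a pleasant way.

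One justification in your write-up is wrong as stated and should be repaired, though the repair is immediate. Local finiteness of $S$ does \emph{not} guarantee that the sublattice $L$ generated by $F$ under $\vee,\wedge$ is contained in $S$: e.g.\ $S=\{(0,0),(1,2),(2,1)\}$ is locally finite but $(1,2)\vee(2,1)=(2,2)\notin S$. (Finiteness of $L$ is also not a consequence of local finiteness; it holds for free, since every coordinate of an element of $L$ is a coordinate of some generator, so $|L|\leq |F|^n$.) The correct observation, which you implicitly use when extending $\mu$ by zero, is that the FKG condition itself forces $\Supp(\bP)$ to be closed under $\vee$ and $\wedge$: for $u,v\in\Supp(\bP)$ one has $\bP(u\vee v)\bP(u\wedge v)\geq\bP(u)\bP(v)>0$, so $u\vee v,u\wedge v\in\Supp(\bP)$. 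Hence the sublattice generated by any finite $F_N\subset\Supp(\bP)$ lies in $\Supp(\bP)$, is finite and max/min-stable, and has positive mass, which is exactly what both your finite-support step and your conditioning step need. With that substitution (and noting that an increasing function need only be defined on $\Supp(\bP)$, which is now itself a distributive lattice, so no extension of $f,g$ off $S$ is required), your argument goes through.
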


\begin{proof}
This is essentially the content of~\cite{Pres74}. Let $g:\R^n\to\R$ be a non-negative, bounded and increasing function, and define the probability measure $\bQ$ on $\R^n$ with
\[
\frac{\dd \bQ}{\dd \bP}(u)\,=\, \frac{g(u)}{\bE g}\,,\qquad\text{for }\bP\text{-a.e.\ }u\in \R^n\,,
\]
where $\bE=\bE_\bP$ denotes the expectation with respect to $\bP$ (we assume that $\bE g>0$, otherwise $g\equiv0$ $\bP$-a.e.\ and~\eqref{eq:fkg} holds for all $f$). Since $\bP$ satisfies~\eqref{eq:fkgcondition} and $g$ is increasing, one clearly has for $\bP$-a.e.\ $u,v\in\R^n$,
\begin{align*}
\bQ(u\vee v)\bP(u\wedge v) = \frac{g(u\vee v)}{\bE g}\bP(u\vee v)\bP(u\wedge v)\geq \frac{g(u)}{\bE g}\bP(u)\bP(v) = \bQ(u)\bP(v)\,.
\end{align*}
Therefore, it follows\footnote{Indeed, since $\mathrm{Supp}(\bP)$ is locally finite, there exists a $\gs$-finite counting measure $\mu$ on some set $A\subset\R$ such that $\mathrm{Supp}(\bP)\subset A^n$. Letting $\go\ceq\mu^{\otimes n}$ in~\cite{Pres74}, this yields the result (details are left to the reader).} from~\cite[Theorem~3]{Pres74} that for $f:\R^n\to\R$ a bounded increasing function, one has $\bE_\bQ f\geq\bE_\bP f$, which is exactly~\eqref{eq:fkg}. Recalling Lemma~\ref{lem:fkg:lipschitz}, the inequality~\eqref{eq:fkg} also holds for all increasing $f,g\in L^2(\bP)$, finishing the proof.
\end{proof}

With these lemmata at hand, the proof of Theorem~\ref{thm:fkg:markovcond} is immediate.

\begin{proof}[Proof of Theorem~\ref{thm:fkg:markovcond}]
Under the assumptions (H1, H2), Lemmata~\ref{lem:fkgcondition:H1},~\ref{lem:fkgcondition:cond} imply that the measure $\bP_{X^n}(\cdot|A)$ satisfies the FKG condition. By Lemma~\ref{lem:conclusion:inequality}, it is positively associated, completing the proof.
\end{proof}

\subsection{Proof of the corollaries}
Let $T>0$, let $X^n=(X_1,\ldots,X_n)$ be a random vector in $\R^n$, and $X_0\ceq x_0\in\R$. One can define a rcll (resp. continuous) interpolation of $X$ in $\cD([0,T],\R)$ (resp. $\cC([0,T],\R)$) by letting, for $t\in[0,T]$,
\begin{align}\label{eq:interpolations}
\hat X_t^n\,&\ceq\, \sum_{j=0}^n X_j \ind_{[Tj/n, T(j+1)/n)}(t)\,, \\
\notag\text{resp.}\qquad \tilde X_t^n\,&\ceq\, \sum_{j=0}^n \left[X_j + (\tfrac{nt}{T}-j)(X_{j+1}-X_j) \right] \ind_{[Tj/n, T(j+1)/n)}(t)\,.
\end{align}
Both are increasing functions of $X^n$; hence by Lemma~\ref{lem:imageincr}, if the vector $X^n$ is positively associated, then the processes $\hat X^n$ and $\tilde X^n$ are positively associated as well. Moreover by Lemma~\ref{lem:fkg:convergence}, if the sequence of processes $\hat X^n$ (or $\tilde X^n$) converges weakly, then the limiting process is positively associated as well. Therefore, each corollary of Theorem~\ref{thm:fkg:markovcond} will be proven by determining a sequence of Markov chains $X^n$ with transition kernel $\bbp^n$, and sets $A_n$ in $\cE_\cD$ (or $\cE_\cC$) such that,

--- for all $n\geq1$, $\bbp^n$ satisfies (H1),

--- for all $n\geq1$, $A_n$ satisfies (H2) under $\bP_{\hat X^n}$ (or $\bP_{\tilde X^n}$),

--- the sequence of processes $\hat X^n$ (or $\tilde X^n$) conditioned on $A_n$, $n\geq1$, converges weakly to the desired process.

\begin{proof}[Proof of Corollary~\ref{corol:fkgcond:brown}]
Let us prove the corollary simultaneously in the four cases, which we denote respectively with the subscripts $j\in\{1,2,3,4\}$. Let $Y^n$ be the first $n$ steps of a random walk with discrete Laplace increment for some $\gb>0$.\footnote{Of course, one could also consider a simple random walk (recall Lemma~\ref{lem:examples}.$(ii)$), provided that they restrict themself to $n\in2\N$ in the cases $j\in\{1,2\}$ below.} We may take $\gb>0$ such that the Laplace increments have unit variance, and we set $X^n\ceq n^{-1/2}Y^n$. Then for all $n\geq1$, it follows from Lemma~\ref{lem:examples}.$(i)$ that the transition kernel $\bbp^n$ of the Markov chain $X^n$ satisfies (H1).

We then define the sets $A_{j,n}=A_j$, $n\geq1$ differently for each case $j\in\{1,2,3,4\}$. Let,
\begin{equation}
A_j\,\ceqq\,\begin{cases}
\{x\in\cD([0,T],\R)\,|\,x_T=0\} & \text{for }j=1,\\
\{x\in\cD([0,T],\R)\,|\,x_T=0; \forall\,t,\, x_t\geq0\} & \text{for }j=2,\\
\{x\in\cD([0,T],\R)\,|\,\forall\,t,\, x_t\geq0\} & \text{for }j=3,\\
\{x\in\cD([0,T],\R)\,|\,\forall\,t,\, x_t\in[a(t),b(t)]\} & \text{for }j=4,
\end{cases}
\end{equation}
Then, for $n$ large enough, one clearly has in the four cases that $\bP_{\hat X^n}(A_j)>0$ and $A_j$ is max/min-stable, so (H2) holds.

By Theorem~\ref{thm:fkg:markovcond}, we deduce in each case $j\in\{1,2,3,4\}$ that $\bP_{\hat X^n}(\cdot|A_j)$ (similarly $\bP_{\tilde X^n}(\cdot|A_j)$) is positively associated for all $n\in\N$. Therefore, it only remains to confirm in each case that the conditioned random walk converges weakly to the desired limit. Those results are classical: for the Brownian bridge $j=1$, this is the content of~\cite{Lig68}. For the Brownian excursion $j=2$, this can be found in~\cite{Kai76}. For the Brownian meander $j=3$, we refer to~\cite{Igl74}. Finally in the case $j=4$, by assumption the event $A_4$ has positive probability for the Wiener measure, therefore the weak convergence can be deduced directly from Donsker's theorem (details are left to the reader). This concludes the proof of the corollary.
\end{proof}

\begin{proof}[Proof of Corollary~\ref{corol:fkgcond:bessel}]
For $\nu\in\R$, we may define a Markov chain $(Y_k)_{k\geq0}$ on $\N$ with transition probabilities that satisfy,
\begin{align*}
p_{i,i+1}\,&=\,1-p_{i,i-1}\,\ceq\, \frac12\left(1+\frac{2\nu+1}{2i}+o\big(i^{-1}\big)\right)\,,\qquad\text{if }i\geq1,\\
p_{0,1}\,&=\,1\,,
\end{align*}
and $p_{i,j}=0$ otherwise. Since $Y$ only makes unitary steps, by Lemma~\ref{lem:examples}.$(ii)$ its transition kernel satisfies (H1). For $n\geq1$, letting $X^n\ceq n^{-1/2}(Y_1,\ldots,Y_n)$ and $\tilde X^n$ its rcll interpolation in $\cD([0,T],\R)$, one deduces from Theorem~\ref{thm:fkg:markovcond}, Remark~\ref{rem:nocond} and Lemma~\ref{lem:imageincr} that $\bP_{\tilde X^n}$ is positively associated (this is a carbon copy of previous arguments, details are left to the reader). Moreover when $\nu>-1$, the convergence of the process $\tilde X^n$ to the Bessel process of index $\nu$ is proven in~\cite[Theorem~5.1]{Lam62}. By Lemma~\ref{lem:fkg:convergence}, this concludes the proof of the corollary.
\end{proof}

\begin{remark}\label{rem:bessel}
Corollary~\ref{corol:fkgcond:bessel} is also expected to hold for indices $\nu\leq -1$ (i.e. when the barrier at 0 is absorbing), however this requires a convergence result to Bessel processes different from~\cite{Lam62}. Since this is not the object of the present paper, we leave this consideration to later work.
\end{remark}

\section{Characterization of the FKG condition for lattice random walks}
\label{sec:rw}
In this section we prove Proposition~\ref{prop:rw:fkgcharacterization}. Recall Definitions~\ref{def:crossings} and~\ref{def:fkgcondition}.

\begin{proof}[Proof of Proposition~\ref{prop:rw:fkgcharacterization}]
First, recall the assumption (H1) from Theorem~\ref{thm:fkg:markovcond}, and notice that it is implied by $(ii)$; so we deduce from Lemma~\ref{lem:fkgcondition:H1} that $(ii)\Rightarrow(i)$. 

We now prove $(i)\Rightarrow(ii)$. Recall that $a\ceqq \mathrm{gcd}(w_1-w_2;w_1,w_2\in\Supp(\bP))$, and the notation $u^k\ceq (u_1,\ldots,u_k)$. If $(ii)$ fails, then there exists some large $k_0\in\N$ and $u_{k_0}\leq v_{k_0}$ in $\mathrm{Supp}(X_{k_0})$, $u_{{k_0}+1}\geq v_{{k_0}+1}$ in $\bbZ$, such that,
\[
\bP(u_{{k_0}+1}-u_{k_0})\bP(v_{{k_0}+1}-v_{k_0}) \,>\, \bP(u_{{k_0}+1} - v_{k_0})\bP(v_{{k_0}+1}-u_{k_0})\,\geq\,0\,.
\]
By definition of $\Supp(X_{k_0})$ there exists two paths $u^{k_0}=(u_1,\ldots,u_{k_0})$ and $v^{k_0}=(v_1,\ldots,v_{k_0})$ such that $\bP_{X^{k_0}}(u^{k_0})>0$, $\bP_{X^{k_0}}(v^{k_0})>0$. For $j\geq {k_0}$, let us define by induction,
\[\begin{aligned}
&u_{j+1}\ceq\begin{cases}
u_j+(u_{{k_0}+1}-u_{k_0}) &\text{if }j-{k_0}\text{ is even,}\\
u_j+(v_{{k_0}+1}-v_{k_0}) &\text{if }j-{k_0}\text{ is odd,}
\end{cases}
\\\text{and}\quad &v_{j+1}\ceq
\begin{cases}
v_j+(v_{{k_0}+1}-v_{k_0}) &\text{if }j-{k_0}\text{ is even,}\\
v_j+(u_{{k_0}+1}-u_{k_0}) &\text{if }j-{k_0}\text{ is odd.}
\end{cases}
\end{aligned}\]
Then, one has $u_j\leq v_j$ if $j-{k_0}$ is even, and $v_j\leq u_j$ if $j-{k_0}$ is odd. Therefore, one observes for $m\geq k_0$ that,
\[
\frac{\bP_{X^{m}}(u^{m}\wedge v^{m})\bP_{X^{m}}(u^{m}\vee v^{m})}{\bP_{X^{m}}(u^{m})\bP_{X^{m}}(v^{m})}\,\leq\, C\left(\frac{\bP(u_{{k_0}+1}- v_{k_0})\bP(v_{k_0+1}-u_{k_0})}{\bP(u_{k_0+1}-u_{k_0})\bP(v_{k_0+1}-v_{k_0})}\right)^{m}\,,
\]
for some $C>0$. Since the r.h.s. goes to $0$ as $m\to+\infty$, there exists a large $m$ and $u^m$, $v^m$ such that~\eqref{eq:fkgcondition} fails, which conclude the proof of $(i)\Rightarrow(ii)$.

Let us now show that $(iii)\Rightarrow(ii)$. Let $b'\in\bbZ$, $u_1,v_1\in a\bbZ+b'$ and $u_2,v_2\in\bbZ$. We may assume without loss of generality that $u_1>v_1$, and that $u_2<v_2$ (otherwise~\eqref{eq:def:crossings} clearly holds). Let us define,
\begin{equation}\label{eq:prop:rw:fkgcharacterization:1}
w_1=u_2-u_1\;,\;  w_2=u_2-v_1\;,\;w_3=v_2-u_1\;,\; w_4 = v_2-v_1\;.
%w_1=u_2-u_1\;,\;  w_2=(u_2-v_1)\wedge(v_2-u_1)\;,\;w_3=(u_2-v_1)\vee(v_2-u_1)\;,\; w_4 = v_2-v_1\;.
\end{equation}
In particular, one has $w_i\in(a\bbZ+b)$ for $i\in\{1,\ldots,4\}$, $w_1<w_i<w_4$ for $i\in\{2,3\}$ and $w_1+w_4=w_2+w_3$. Moreover~\eqref{eq:def:crossings} can be rewritten $p_{w_1}p_{w_4}\leq p_{w_2}p_{w_3}$. If $p_{w_1}=0$ or $p_{w_4}=0$ then~\eqref{eq:def:crossings} clearly holds, otherwise one has
\begin{equation}\label{eq:prop:rw:fkgcharacterization:2}
p_{w_2}p_{w_3}\,=\, p_{w_1}p_{w_4} \frac{p_{w_2}p_{w_3}}{p_{w_1}p_{w_4}}\,\geq\, p_{w_1}p_{w_4} \left(\frac{p_{w_2}}{p_{w_1}}\right)^{1-\frac{w_4-w_3}{w_2-w_1}}\,=\,p_{w_1}p_{w_4}\,,
\end{equation} 
where we used that $k\mapsto p_k$ is log-concave on $a\bbZ+b$.\smallskip

It remains to prove $(ii)\Rightarrow(iii)$. In order to lighten upcoming formulae, we assume without loss of generality that $a=1$ and $b=0$. 
Let $w_1\in \bbZ$, and let us assume w.l.o.g.\ that $p_{w_1}>0$. We first prove by induction that for all $i\geq 1$,
\begin{equation}\label{eq:prop:rw:fkgcharacterization:3}
\frac{p_{w_1+1}}{p_{w_1}}\,\geq\, 
\left(\frac{p_{w_1+i}}{p_{w_1}}\right)^{i^{-1}}\,.
\end{equation}
This clearly holds for $i=1$. We now assume that~\eqref{eq:prop:rw:fkgcharacterization:3} holds for some $i\in\N$, and we define $w_2\ceq w_1+1$, $w_3\ceq w_1+i$ and $w_4\ceq w_1+(i+1)$. With this notation, we claim that one can define some points $u_1,u_2,v_1,v_2\in \bbZ$ such that the identities~\eqref{eq:prop:rw:fkgcharacterization:1} hold, and such that $(ii)$ yields $p_{w_1}p_{w_4}\leq p_{w_2}p_{w_3}$ (we leave the details to the reader). Therefore,
\[
\frac{p_{w_2}}{p_{w_1}} \,\geq\, \frac{p_{w_4}}{p_{w_3}} \,=\, \frac{p_{w_4}}{p_{w_1}} \frac{p_{w_1}}{p_{w_3}}\,\geq\, \frac{p_{w_4}}{p_{w_1}} \left(\frac{p_{w_1}}{p_{w_2}}\right)^{i}\,,
\] 
where the second inequality follows from the induction hypothesis. Rearranging the terms, this shows that~\eqref{eq:prop:rw:fkgcharacterization:3} holds for the index $(i+1)$, finishing the proof of~\eqref{eq:prop:rw:fkgcharacterization:3} by induction. 

Finally, the log-concavity of $k\mapsto p_k$ follows naturally from~\eqref{eq:prop:rw:fkgcharacterization:3}.
Indeed, let $w_1,w_2\in \bbZ$ and assume that there exists % at hand, we deduce that for all 
$k\in\{w_1+1,\ldots,w_2-1\}$ such that,
\begin{equation}\label{eq:prop:rw:fkgcharacterization:4}
\frac{\log p_{k}-\log p_{w_1}}{k-w_1}\,<\, 
\frac{\log p_{w_2}-\log p_{w_1}}{w_2-w_1}\,.
\end{equation}
We may assume w.l.o.g.\ that $k$ is the smallest index satisfying~\eqref{eq:prop:rw:fkgcharacterization:4}: in particular $(k,\log p_{k})$ is strictly below the straight line from $(w_1,\log p_{w_1})$ to $(w_2,\log p_{w_2})$, whereas $(k-1,\log p_{k-1})$ is above. This implies that
\[
\log p_{k}-\log p_{k-1} \,<\, \frac{\log p_{w_2}-\log p_{k-1}}{w_2-(k-1)}\,,
\]
which directly contradicts~\eqref{eq:prop:rw:fkgcharacterization:3}. 
This finishes the proof that $k\mapsto p_k$ is log-concave.
\end{proof}

\appendix
\section{Proof of Lemma~\ref{lem:fkg:lipschitz}}\label{app:convergence}
In this section we prove Lemma~\ref{lem:fkg:lipschitz}. It follows from the following integral approximation result.

\begin{lemma}\label{lem:fkg:functionincreasing}
Let $(E,\Bor(E),\preceq, \mu)$ be a metric pogroup endowed with its Borel $\sigma$-algebra and a finite, regular measure $\mu$. Then for any function $f$ measurable, increasing, non-negative, bounded and $\eps>0$, there exists $h$ increasing, Lipschitz-continuous, non-negative and bounded such that $\int_E|f-h| d\mu<\eps$.
\end{lemma}
Notice that this naturally implies Lemma~\ref{lem:fkg:lipschitz}. %\ref{lem:fkg:lipschitz}. 
Indeed, if $f$, $g$ are increasing, non-negative and bounded, they can be approximated for any $\eps>0$ by some increasing Lipschitz functions $h_1, h_2$ such that $\int_E|f-h_1| d\bP<\eps$ and $\int_E|g-h_2| d\bP<\eps$. If, for any $\eps>0$, $h_1, h_2$ satisfy~\eqref{eq:fkg}, then letting $\eps\to0$ yields that $f,g$ satisfy~\eqref{eq:fkg} as well, since they are bounded and $\bP$ is finite. Afterward, the FKG inequality can straightforwardly be extended to $f,g$ increasing, bounded and signed, by applying it to $f+\|f\|_\infty$, $g+\|g\|_\infty$; then to any increasing functions $f,g\in L^2(\bP)$, by noticing that if $f$ is increasing, then so is $(f\wedge K) \vee (-K)$ for all $K>0$, and by applying the dominated convergence theorem. Let us now prove Lemma~\ref{lem:fkg:functionincreasing}.

\begin{proof}[Proof of Lemma~\ref{lem:fkg:functionincreasing}]
If the function $h$ is not required to be increasing, then this result is standard. Indeed, since $\mu$ is a finite measure on a metric space, it is outer regular: for any $B\in\Bor(R)$, there exists $U$ open such that $B\subset U$ and $\mu(U\setminus B)\le\eps/2$. Then one can approximate the function $f=\ind_B$ with $h_K=1\vee (Kd(\cdot,U^c))$, which is Lipschitz-continuous and satisfies $\int_E|\ind_B-h_K|d\mu<\eps$ for $K$ large enough by the dominated convergence theorem. Finally, one can extend the result to any measurable, non-negative, bounded function by approaching it with a positive linear combination of indicator functions.

Let us now consider $B\in\Bor(R)$ an increasing set. Recall that we may assume, without loss of generality, that the metric $d(\cdot,\cdot)$ on $E$ is translation invariant, see e.g.~\cite{Kak36}. We have the following result, which is proven afterwards.
\begin{lemma}\label{lem:fkg:openincreasing}
Let $\eps>0$ and let $(E,\Bor(E),\preceq, \mu)$ be a metric pogroup endowed with its Borel $\sigma$-algebra and a finite, regular measure $\mu$. Then for any increasing set $B\in\Bor(E)$ and $\eps>0$, there exists an increasing open set $U$ such that $B\subset U$ and $\mu(U\setminus B)\le \eps$.
\end{lemma}
Moreover, we also claim that if $U$ is increasing, then $x\mapsto d(x,U^c)$ is an increasing function. Indeed, assume that there exists $x\preceq y$ such that $d(x,U^c)>d(y,U^c)$. Then there is $z\in U^c$ with $d(y,z)<d(x,U^c)$. This implies $z+x-y\in U^c$ because $z+x-y\preceq z$ and $U$ is increasing, and $d(x,z+x-y)=d(y,z)<d(x,U^c)$, which yields a contradiction.

Therefore, if $B$ is an increasing set, letting $U\supset B$ be an increasing open set such that $\mu(U\setminus B)\le \eps/2$, then the function $h_K=1\vee (Kd(\cdot,U^c))$ is increasing and satisfies $\int_E|\ind_B-h_K|d\mu<\eps$ for $K$ large enough; and since any non-negative, increasing function can be approximated with positive linear combinations of increasing indicator functions, this concludes the proof of Lemma~\ref{lem:fkg:functionincreasing} subject to Lemma~\ref{lem:fkg:openincreasing}.
\end{proof}

\begin{proof}[Proof of Lemma~\ref{lem:fkg:openincreasing}]
Let $B\in\Bor(E)$ be an increasing set, and $\eps>0$. Since $\mu$ is regular, there exists $K\subset B^c$ compact such that $\mu(B^c\setminus K)\leq \eps$. Let us define
\[
K^\downarrow\coloneqq \{x\in E; \exists y\in K, y\succeq x\}\,.\]
One notices that $K^\downarrow$ is a decreasing set that contains $K$. Moreover, let us prove that it is closed: Let $(x_n)_{n\ge1}$ be a sequence in $K^\downarrow$, and assume that there exists $x\in E$ such that $x_n\to x$ as $n\to+\infty$. For each $n\ge1$, there exists $y_n\in K$ such that $y_n\succeq x_n$; and since $K$ is sequentially compact, we may assume (by extracting a subsequence) that $(y_n)_{n\ge1}$ converges to some $y\in K$. Then, $y_n-x_n\in\cC_+$ for all $n$, where we recall that $\cC_+$ is the non-negative cone. Since $\cC_+$ is closed, this implies that $y\succeq x$, hence $x\in K^\downarrow$.

%Then, $K^\downarrow$ is a closed set that contains $K$; and 
Since $B^c$ is also a decreasing set, one notices that $K^\downarrow\subset B^c$. Therefore, 
\[
\mu\big(\big(K^\downarrow\big)^c\setminus B\big) = \mu\big(B^c\setminus K^\downarrow\big) \leq \mu(B^c\setminus K) \leq \eps\,.
\]
Since $(K^\downarrow)^c$ is increasing, open and contains $B$, this finishes the proof.
\end{proof}

%%%%%%%%%%%%%%%%%%%%%%%%%%%%%%%%%%%%%%%%%%%%%%
%% Support information, if any,             %%
%% should be provided in the                %%
%% Acknowledgements section.                %%
%%%%%%%%%%%%%%%%%%%%%%%%%%%%%%%%%%%%%%%%%%%%%%
\section*{Acknowledgments}
The author is thankful to Christian Houdr\'e, Christophe Garban, Diederik van Engelenburg and Romain Panis for fruitful discussions, and to Niccol\`{o} Pianalto and an anonymous referee for pointing out an oversight in a previous version.

%%%%%%%%%%%%%%%%%%%%%%%%%%%%%%%%%%%%%%%%%%%%%%
%% Funding information, if any,             %%
%% should be provided in the                %%
%% funding section.                         %%
%%%%%%%%%%%%%%%%%%%%%%%%%%%%%%%%%%%%%%%%%%%%%%
\section*{Funding}
The author was supported by the grant ANR-22-CE40-0012 (project Local).

\end{document}